\documentclass{article}

\usepackage{arxiv}

\usepackage[utf8]{inputenc} 
\usepackage[T1]{fontenc}    
\usepackage{hyperref}       
\usepackage{url}            
\usepackage{booktabs}       
\usepackage{amsfonts}       
\usepackage{nicefrac}       
\usepackage{microtype}      
\usepackage{lipsum}
\usepackage{graphicx}

\usepackage{mathrsfs}       
\usepackage{amsmath,amssymb}
\usepackage{amscd}          
\usepackage{amsthm}         
\usepackage{tocloft}        
\usepackage[section]{placeins} 
\usepackage{subcaption}
\usepackage{cleveref}

\usepackage{xargs}                      
\usepackage[pdftex,dvipsnames]{xcolor}  
\usepackage[colorinlistoftodos,prependcaption,textsize=tiny]{todonotes}
\newcommandx{\unsure}[2][1=]{\todo[linecolor=red,backgroundcolor=red!25,bordercolor=red,#1]{#2}}
\newcommandx{\change}[2][1=]{\todo[linecolor=blue,backgroundcolor=blue!25,bordercolor=blue,#1]{#2}}
\newcommandx{\info}[2][1=]{\todo[linecolor=OliveGreen,backgroundcolor=OliveGreen!25,bordercolor=OliveGreen,#1]{#2}}
\newcommandx{\improvement}[2][1=]{\todo[linecolor=Plum,backgroundcolor=Plum!25,bordercolor=Plum,#1]{#2}}
\newcommandx{\thiswillnotshow}[2][1=]{\todo[disable,#1]{#2}}

\usepackage{algorithmic}
\usepackage{algorithm}

\newcommand{\diff}{\mathrm{d}}

\newtheorem{theorem}{Theorem}[section]
\newtheorem{corollary}{Corollary}[theorem]

\newtheorem{definition}{Definition}[section]
\newtheorem*{remark}{Remark}

\graphicspath{ {./images/} }
\numberwithin{equation}{subsection}
\numberwithin{theorem}{section}

\newcommand{\tabref}[1]{{\tablename~\ref{#1}}}

\title{An Eulerian approach to regularized JKO scheme with low-rank tensor decompositions for Bayesian inversion}

\author{
 Vitalii Aksenov \\
  WIAS \\
  Berlin, Germany \\
  \texttt{vitalii.aksenov@wias-berlin.de} \\
   \And
 Martin Eigel \\
  WIAS \\
  Berlin, Germany \\
  \texttt{martin.eigel@wias-berlin.de} \\
}

\begin{document}
\maketitle
\begin{abstract}
    The possibility of using the Eulerian discretization for the problem of modelling high-dimensional distributions and sampling, is studied.
    The problem is posed as a minimization problem over the space of probability measures with respect to the Wasserstein distance and solved with entropy-regularized JKO scheme.
    Each proximal step can be formulated as a fixed-point equation and solved with accelerated methods, such as Anderson's.
    The usage of low-rank Tensor Train format allows to overcome the \emph{curse of dimensionality}, i.e. the exponential growth of degrees of freedom with dimension, inherent to Eulerian approaches.
    The resulting method requires only pointwise computations of the unnormalized posterior and is, in particular, gradient-free.
    Fixed Eulerian grid allows to employ a caching strategy, significally reducing the expensive evaluations of the posterior.
    When the Eulerian model of the target distribution is fitted, the passage back to the Lagrangian perspective can also be made, allowing to approximately sample from it.
    We test our method both for synthetic target distributions and particular Bayesian inverse problems and report comparable or better performance than the baseline Metropolis-Hastings MCMC with same amount of resources.
    Finally, the fitted model can be modified to facilitate the solution of certain associated problems, which we demonstrate by fitting an importance distribution for a particular quantity of interest.

    We release our code at \url{https://github.com/viviaxenov/rJKOtt}
    
\keywords{ \
Wasserstein distance \and \
JKO scheme \and
Low-rank tensor decompositions \and \
Bayesian inverse problems
}
\end{abstract}

\section{Introduction}

The present paper focus on acquiring a tractable model of some complicated probability distribution $\rho_\infty$ such that an efficient sampling process is achieved.
We assume that this distribution is absolutely continuous on some $X \subseteq \mathbb{R}^d$ and in the notation do not distinguish between the distribution and its probability density function.
We focus on the setting where the target distribution is accessible via pointwise evaluation of the unnormalized probability density function.
One of the most important practical tasks falling into this setting is the Bayesian inverse problem, which we consider as an application of the proposed method.
Oftentimes, parameters of a physical system need to be estimated, along with the quantification of the uncertainty of the estimation, from indirect measurements.
In the Bayesian setting, this is done by studying the Bayesian posterior distribution
\begin{equation*}
    \rho_\infty(x|y) \propto \rho_0(x) \ell(F(x), y),
\end{equation*}
where $x$ stands for the unknown parameters, the \emph{forward operator} $F$ describes the response of the system with given parameters $x$, $y$ is the noisy measurement of the said response, and $\rho_0$ and $l$ describe the belief on the prior distribution of the parameters and the distribution of noise. 
For example, if an inverse problem for a heat equation is considered, $x$ could be the parameters, describing the initial distribution of the temperature, and the operator $F$ would map these parameters to temperatures at a set of fixed temporal and spatial points, in which the  measurements were performed.

The evaluation of $F$ quite often requires computationally intense calls to numerical solvers for the underlying {PDE-} or ODE-based models.
In application with more complex measurement systems such as astrophysical ones, the computation of the likelihood function can also be an expensive subroutine~\cite{aghanim2020planck}.
Although either uniform or Gaussian priors are usually assumed, it can be advantageous to use more involved priors, e.g. derived from data in medical imaging problems~\cite{liu2024population}.
Thus, the computation of the posterior distribution easily becomes the bottleneck in the numerical algorithms for such problems.
Hence, the number of these evaluations required to produce an sufficiently accurate sampling algorithm is a suitable measure to compare the performance of different approaches.

Although in principle the formula above contains all information about the posterior, actual uncertainty quantification tasks such as evaluation of expectations and credible intervals of quantities of interest requires numerical integration.
In general, its complexity scales exponentially with the dimension $d$ of the problem (i.e. the number of model parameters to be determined), rendering this approach impractical even for moderate dimensions.

\subsection{Sampling approaches}
    One of the possible approaches is to produce an approximate sample from the target, i.e. $\{x_i \}_{i=1}^N:\ x_i\sim \rho_\infty \text{i.i.d.}$.
    A large class of sampling methods, called \emph{Markov Chain Monte-Carlo}, iterate through a Markov Chain, which has the target as its invariant distribution. 
    In the Metropolis-Hastings algorithm, a new iterate is proposed based on the previous one, and then the proposal is accepted or rejected, with the criterion being designed to satisfy the \emph{detailed balance} condition, which, in turn, guarantees that the unique stationary distribution is $\rho_\infty$.
    If the gradient information can be utilised, one can consider the \emph{Langevin dynamics} method~\cite{roberts1996exponential}.
    Noticing that the  stochastic differential equation (SDE)
    \begin{equation}\label{eq:langevin_sde}
        \mathrm{d}X_t = \frac12\nabla\log\rho_\infty(X_t)\mathrm{d}t + \mathrm{d}W_t
    \end{equation}    
    with $W_t$ is the standard Brownian motion, has $\rho_\infty$ as a stationary distribution and converges in probability and its law converges in distribution to it, one can consider a discretization of this equation, for example, with Euler-Maruyama method.
    Since the time discretization converges to the target distribution only when time step converges to zero, it can be combined with a Metropolis-Hastings rejection step, yielding the \emph{Metropolis-adjusted Langevin Algorithm} (MALA).
    It is a good practice to run multiple chains in parallel, for example, to reduce variance in the estimation of the autocorrelation time of the chain or track convergence with Gelman-Rubin criterion~\cite{gelman1992single}.
    The next step in this direction is to consider the iterates in the parallel chains as an ensemble of particles, and use the information from the whole ensemble to further improve the properties of the algorithm. 
    In~\cite{goodman2010ensemble}, a set of proposal distributions, defined on the whole ensemble, is proposed, which ensures the affine invariance property of the algorithm. 
    In practice, this property means that convergence rate is invariant under the affine transformation in the state space.
    Combining the idea of the particle ensemble and gradient information yields the \emph{Affine-Invariant Langevin Dynamic} (ALDI) sampler~\cite{garbuno2020affine}, which is a time discretization of the SDE
    \begin{equation}\label{eq:aldi_sde}
        \mathrm{d}X^{(i)}_t = C(X_t)\nabla_{X^{(i)}_t}\log \rho_\infty(X^{(i)}_t)\mathrm{d}t
            + \frac{d + 1}{N}\left( X^{(i)}_t - m(X_t)\right)\mathrm{d}t
            + \sqrt{2}C^{\frac{1}{2}}(X_t)\mathrm{d}W^{(i)}_t
    \end{equation}
    with $X_t = \left(X^{(1}_t, \dots, X^{(N)}_t \right)$ being the ensemble of $N$ particles, and $m(X_t),\ C(X_t)$ are respectively the empirical mean and covariance of the ensemble. 
    The efficiency and robustness of this method can be further improved with such numeric techniques as ensemble enrichment and homotopy~\cite{eigel2022less}.

    The number of posterior calls in a Metropolis-Hastings MCMC can be  estimated as $O(N_{\text{chains}}\cdot N_{\text{steps}})$.
    Usually, a (possibly substantial) fraction of the initial steps, referred to as \emph{burn-in stage}, is discarded since the distribution of the chains first has to converge to the invariant distribution to be admissible.
    The consequent draws are not independent with the chain requiring multiple steps to <<forget>> the influence of the predecessors, which can be estimated by the \emph{autocorrelation time} $\tau_{ac}$.
    Because of this, only a $\frac{1}{\tau_{ac}}$ fraction of the performed steps yields independent samples.
    The computation of the gradient, e.g. required by MALA and ALDI, in principle can be achieved in $O(1)$ time compared to the computation time of the posterior if the numerical implementation of $\rho_\infty$ supports \emph{fast automatic differentiation}~\cite{evtushenko1998computation}.
    Practically, this is not the case if a rather complicated numerical solver for the forward model is used.
    The gradient hence might have to be estimated via finite-difference formulas, adding an $O(d)$ factor to the complexity.
    All these considerations render MCMC approaches time-consuming.
    Additionally, the extensive amount of the posterior calls cannot be reused in any way.
    
\subsection{Gradient flow structure of sampling approaches}
    The seminal paper of Jordan, Kinderlehrer and Otto~\cite{jordan1998variational}, the Langevin process \eqref{eq:langevin_sde} is viewed from the perspective of the geometry of the space of probability distribution endowed with the Wasserstein distance.
    To be more precise, the process is proved to be the gradient flow of the \emph{Kullback-Leibler divergence}
    \begin{equation}\label{eq:KL_definition}
        \operatorname{KL}(\rho|\rho_\infty) = \int \log\left( \frac{\rho}{\rho_\infty}\right)\mathrm{d}\rho,
    \end{equation}
    which qualitatively means that the distribution is gradually changing in the direction of the steepest descent of the functional.
    In general, a gradient flow of a functional $\mathcal{E}$ starting from some initial distribution $\rho^0$ can be defined as a continuous time limit (in case one exists) of the proximal scheme
\begin{equation}\label{eq:JKO_general_definition}
        \rho^{k+1} = \arg\min_\rho \mathcal{E}(\rho) + \frac{1}{2T_k} W^2_2(\rho, \rho^k), \quad k = 0, 1, \dots 
    \end{equation}
    as $\max_k T_k \to 0$.
    The scheme is referred to as \emph{minimizing movements} (MM) or, in case that $\mathcal{E} = \operatorname{KL}(\cdot | \rho_\infty)$, the \emph{Jordan, Kinderlehrer and Otto} (JKO) scheme.
    The mathematical background regarding existence, uniqueness and rates of convergence to the optimum for the gradient flows is comprehensively presented in~\cite{ambrosio2005gradient}.
    The theory provides a unified framework for mathematical modelling of a multitude of natural processes, which are characterized by conservation of mass and monotone decrease of a certain functional.
    Evolutions governed by diffusion, porous media~\cite{otto2001geometry}, quantum drift-diffusion~\cite{gianazza2009wasserstein} or Keller-Segel~\cite{blanchet2013gradient} equations all exhibit the structure of a gradient flow with respect to the Wasserstein distance.
    The approach can be extended to models with a discontinuous functional, where the PDE model does not exist, e.g. crowd motion with congestion~\cite{leclerc2019lagrangian}.
    
    From a numerical perspective, using the Wasserstein framework allows to construct novel algorithms for the evolution of distributions based on the various reformulations and regularization of the optimization problem~\eqref{eq:JKO_general_definition}.
    For instance in~\cite{benamou2016augmented,carrillo2022primal} the primal-dual reformulation is studied.  
    Another important variation is based on the entropic regularization of the problem.
    With this, the respective entropic JKO scheme is studied in~\cite{carlier2017convergence}.
    It is shown that under a certain coupling law between the time step and the amount of the entropic regularization, the limit of the entropic scheme is the original gradient flow.
    Numerical examples for this scheme are presented, for example, in~\cite{peyre2015entropic}.
    The approach presented relies on using the proximal stepping in the Wasserstein directly, instead of considering the PDE model of its continuous time limit, the latter either not existing, or being excessively complex. 
    This idea is demonstrated by tackling such problems as crowd motion with congestion and a gradient flow on a Riemannian manifold.
    
    In cases when gradient flows are viewed in the context of modelling natural processes, the dimension of the problem is typically not higher than $d = 3$ and Eulerian approaches with spatial discretization of densities and other appropriate functions are frequently used.
    There are examples of finite difference~\cite{bailo2020}, finite volume~\cite{cances2020variational} and finite element~\cite{benamou2016augmented} discretizations.
    However, as the amount of degrees of freedom required for the spatial discretization normally scales exponentially with the dimension, Lagrangian techniques are preferred in higher dimensions.
    These approaches consider either the behaviour of a particle ensemble or parametrize a flow transporting particles in such a way that they have the target distribution.
    A notable example is \emph{Stein variational gradient descent} (SVGD) introduced in~\cite{liu2016stein} and analyzed in the framework of gradient flows in~\cite{liu2017stein}.
    In this method one assumes the intermediate transports appearing in the gradient flow to lie in a certain \emph{reproducing kernel Hilbert space} with which a tractable deterministic update formula to compute the evolution of the ensemble is obtained. 
    The approach proves to be quite powerful and can for instance be extended to a gradient-free setting~\cite{han2018stein} or to gradient flows with respect to different distances~\cite{maurais2024sampling}.

\subsection{Low-rank Tensor decompositions}
    A central motivation for this work is that -- in our opinion -- the explicit discretization as in Eulerian methods is a more structured and organized way to model posterior distributions than using Langrangian approaches as mentioned above.
    For the latter, the position of the next particle is not known in advance and each step requires the renewed computation of the movement toward the posterior distribution.
    The new position might in fact be close to one of the previously encountered positions and thus not bear a lot of additional information to improve the solution.
    In the Eulerian approach, the posterior is evaluated on a prescribed grid.
    We hypothesize that if the posterior has some additional structure, the values on the entire grid can be approximated within reasonable tolerance by computing only the values at a few nodes. 
    Additionally, since the nodes are known in advance, a caching strategy can be implemented, rendering the posterior calls reusable.
    Additionally, the Eulerian discretization can be adapted to the particular problem at hand and the required precision of the solution.
    
    The goal of the current work is to explore how Eulerian approaches can be efficiently adapted for higher dimensions ($d \geq 4$).
    To alleviate the exponential growth of the number of degrees of freedom in terms of the dimension, a compression format for multidimensional arrays (\emph{tensors}) is required.
    One has to represent the initial data in the compressed format and be able to perform all required operations of the algorithm exactly or approximately without leaving the format.
    In the current work we focus of the popular \emph{Tensor-Train} (TT) decomposition~\cite{oseledets2010tt}.
    A tensor $A \in \mathbb{R}^{N_1 \times \dots \times N_d}$ is in the TT format if it has the form
    \begin{equation}\label{eq:tt_definition_intro}
        A_{i_1\dots i_d} = A^1_{i_1 j_1}A^2_{j_1 i_2 j_2}\cdot\dots\cdot A^d_{j_{d-1}i_d},
    \end{equation}
    where for each $k \in \overline{1,\ d}$, $A^k_{j_{k-1}i_k j_k} \in \mathbb{R}^{r_{k-1} \times N_k \times r_k}$ is a $3$-dimensional tensor, $r_0 = r_{k} = 1$ and Einstein summation convention is assumed.
    Values $r_k$ are called \emph{TT-ranks}.
    If $\max_k r_k = r,\ \max_k N_k = N$, the storage complexity of the tensor is only $\mathcal{O}(dNr^2)$, which in case of low-rank $r$ can be drastically less than $\mathcal{O}(N^d)$ required for the storage of the uncompressed full tensor.
    
    For a given tensor in format \eqref{eq:tt_definition_intro}, its approximation with lower rank and controlled error can be computed by the TT-SVD algorithm.
    Linear algebra operations such as linear combinations, index contractions, inner products or norms can also be efficiently computed.
    If the TT decomposition is used for the discretization of a function, index contraction corresponds to integrating with respect to a subset of variables.
    This means that some typically complex tasks such as the computation of marginal distributions become tractable.
    A plethora of approximate methods have been adapted for the TT format.
    For example, a general minimization problem with respect to an argument in TT format can be solved with the \emph{alternating linear scheme} (ALS) or Modified ALS (MALS), also known as \emph{density matrix renormalization group} (DMRG)~\cite{holtz2012alternating}.
    In the setting where one strives to obtain a low-rank TT approximation of a tensor, each individual entry can be computed by \emph{tensor completion}, e.g. by using a \emph{cross approximation} (TT-cross)~\cite{oseledets2010tt}.
    
    With various algorithms at hand, the TT format has been applied to a variety of practical problems.
    Of relevance to the current topic would be, firstly, density estimation where TT are either used directly~\cite{novikov2021tensor} or in conjunction with Normalizing Flows (NF)~\cite{ren2023high}.
    The approximation of distributions, accessible only via pointwise evaluations of unnormalized densities, was studied in~\cite{cui2022deep}.
    In this paper, the strategy is to construct a series of intermediate densities. A transport between them is approximated via the \emph{inverse Rosenblatt transport}.
    Additionally, tensor trains are reported to be used for PDEs in genereal (see review~\cite{bachmayr2023low}), and, in particular, parabolic PDEs, similar to the ones appearing in the current work ~\cite{richter2021solving, chertkov2021solution}.

\subsection{Contribution and paper organization}
    In this paper, we present a novel method for approximating probability distributions. It aims at applications in Bayesian inversion~\cite{stuart2010inverse}.
    The underlying idea is to minimize a functional $\mathcal{E} = \operatorname{KL}(\cdot | \rho_\infty)$, which quantifies the dissimilarity to the posterior distribution.
    The entropic regularization of the scheme~\eqref{eq:JKO_general_definition} is used to produce a discrete sequence of successive approximations of the target distribution in a tractable compressed representation.
    The optimality condition for the proximal step is reformulated in terms of entropic potentials as a system of nonlinearily coupled heat equations.
    The system is explicitly discretized in an Eulerian way. 
    Subsequently, the heat equations are solved by means of finite difference methods on a high-dimensional grid.
    The coupling is then considered a fixed-point problem, which is tackled by accelerated fixed-point methods.
    To the authors' knowledge, this is the first use of fixed-point methods together with the low-rank tensor-train format. 
    As it will be revealed below, the underlying dynamical formulation of the proximal steps provides dynamics that interpolate between the approximate measures in the form of an ODE or an SDE.
    Numeric solution of these equations provides a way to efficiently draw samples from the model distributions.

    The presented method has the same inputs as the common sampling method such as Metropolis-Hastings MCMC or SVGD, namely the unnormalized posterior density and general a priori parameter bounds.
    We implement the caching of the posterior calls during the solution process. In the Bayesian setting, these contain the expensive solution operator of the model. Together with the accelerated fixed-point scheme, the caching noticeably increases the efficiency of the method.
    Once the model is fitted, certain computations can be performed without additional posterior calls.
    For example, a normalized approximation of the posterior or its marginal distributions are accessible.
    This is important for Bayesian inversion since any statistical quantification of the determined parameters can be carried out efficiently.
    The approximation of the dynamics, which evolves the particles to the target distribution, is also possible.
    This means that an arbitrary amount of the samples from the approximate posterior can be drawn without additional (costly) posterior calls. 
    Finally, the acquired model can be modified to facilitate the solution of a sampling problem for a distribution, similar to the Bayesian posterior.
    We demonstrate it by approximating the optimal importance distribution for a certain quantity of interest.
    
    Previous research closest to the present one can be found in~\cite{chertkov2021solution} and~\cite{han2024tensor}.
    In~\cite{chertkov2021solution}, the Fokker-Planck equation with a general, possibly time-dependent drift term is considered.
    The solution is determined by splitting the operator, describing either the drift or diffusion process.
    The diffusion term is quite well suited for approximation in the TT format due to its intrinsic low-rank structure.
    We utilize a similar approach to solve the heat equation that arises in our formulation.
    The solution of the drift part can be obtained pointwise by solving the characteristic ODEs.
    The TT representation for the whole solution is obtained by means of a tensor reconstruction with the TT-cross algorithm.
    Formally, this method can be applied for the approximation of a target density by setting the drift term $f$ equal to the score of the distribution: $f(x, t) = \nabla\log\rho_\infty(x)$.
    However, the implementation would require the gradient of the posterior, which to be avoided for the sake of computational efficacy.
    Although quite similar to our method in the range of applied numerical techniques, the other method comes from a different context for simulating an actual physical system.
    Thus, it is not clear if the method would be numerically efficient in a setting where only approaching the invariant distribution is of importance.
    In~\cite{han2024tensor}, the same entropy-regularized proximal steps are applied to a linearized version of the target functional.
    The authors argue that under a certain choice of coupling between the time step and the regularization factor, the limit of the regularized scheme for the linearized functional is again the desired distribution.
    With a TT-cross approximation of a certain intermediate quantity, an efficient deterministic update formula is acquired and a particle ensemble can be evolved towards the target in a fashion somewhat similar to SVGD.
    We demonstrate that the regularized step for the original method, despite requiring the solution of a nonlinear coupling problem, can still be tackled with appropriate tools. 
    Moreover, we argue that the direct approximation of the potentials and densities opens up possibilities to extension of the method.
    An example is importance sampling for the expectations of quantities of interest with respect to the posterior. 
    Additionally, the two aforementioned papers report experiments in dimensions up to $d = 6$.
    While it is already impossible to tackle the considered problem numerically without any compression such as tensor trains, in our opinion this dimension is still at the lower end for reasonable practical applications.
    Thus, we verify the performance of our method in much higher dimension up to $d = 30$.
    
    The remaining paper is structured as follows.
    In the second section, reference information on Wasserstein spaces is provided.
    In the third section, the PDE system underlying the method is derived.
    The fourth section contains details of the implementation.
    The subsequent two sections contain the numerical experiments.
    Firstly, the method is verified on trial distributions.
    We then present the use cases of a Bayesian inverse problem with inference of an initial condition with a parabolic and a hyperbolic PDE.
    The manuscript is concluded with a short outlook on possible further developments.

\section{Metric space of measures with respect to the Wasserstein distance}    
    The overall idea of the current approach is to treat the task of sampling from the target distribution or to approximate it with a tractable model as a minimization of a certain functional, that quantifies the dissimilarity to the target.
    We denote the functional $\mathcal{E}$ and in what follows fix it to be the $\operatorname{KL}$ divergence:
    \[
        \mathcal{E}(\rho) = \operatorname{KL}(\rho | \rho_\infty) = \int\log\left( \frac{\rho}{\rho_\infty}\right)\mathrm{d}\rho.
    \]
    This functional is of particular importance because the unnormalized density is used in its computation instead of the true one.
    Since the value of the functional would only change by a constant the minimizer stays unchanged.
    In the sequel, however, we would use the $\mathcal{E}$ notation for the functional, except for explicitly mentioned parts, to signify that the proximal method can be used for a variety of functionals (as long as the aforementioned parts can be efficiently tackled numerically).

    Probability measures lack a natural linear space structure.
    However, if a metric structure is imposed, an analog of a steepest descent can be defined, either as a discrete sequence of proximal steps (minimizing movements) or as a continuous process (a \emph{gradient flow}).
    The Wasserstein distance is based on the solution of an optimal transport problem.
    Unlike other metrics defined on probability measures such as Hellinger or total variance (TV), it is takes into account the metric of the underlyig space, thus being <<aware>> of its the topology.
    Additionally, transport maps that are optimal with respect to this distance can be shown to exist under certain conditions.
    This motivates numerous applications of the Wasserstein distance, both as an element of the analysis in PDE theory (e.g.~\cite{jordan1998variational,otto2001geometry,liero2013gradient,craig2017nonconvex} and references therein) and as a practical tool in statistics~\cite{bernton2019statistical}, machine learning~\cite{torres2021survey}, image processing~\cite{bonneel2023survey} and others.
    \cite{ambrosio2005gradient} provides a detailed reference on the gradient flows both in general metric spaces and in the Wasserstein space.   
    In the rest of the section we recall the concepts required for the definition of our method.

\subsection{Wasserstein distance and its dynamic reformulation}
    In the most generality, the Wasserstein distance can be defined on any separable metric space $X$ with the Radon property given by
    \begin{equation}\label{eq:wasserstein_distance}
        W^p_p(\rho_0, \rho_1) =  \min\limits_{\pi \in \Pi(\rho_0, \rho_1)} \int\limits_{X \times X}d^p(x, y)\mathrm{d}\pi(x, y)
    \end{equation}
    with Borel probability measures $\rho_0,\ \rho_1$ on $X$, the set of probability measures on $X\times X$ with marginals $\rho_0$ and $\rho_1$ denoted by $\Pi(\mu, \nu)$ metric $d$ on $X$.
    The set of probability measures with finite $p$-moments defined by
    \begin{equation}
        \mathscr{P}_p(X) = \left \{ \rho \in \mathscr{P}(X): \int\limits_X d^p(x, x^0) \diff \rho(x) \right \}
    \end{equation}
    is a separable metric space with respect to the $p$-Wasserstein distance.
    It is complete if $X$ is complete.
    Henceforth we consider $X = \mathbb{R}^d$ with Euclidean metric $d(x,y) = \|x - y\|_2^2$ and the $2-$Wasserstein.
    
    One can study a notion of curves in the Wasserstein space to define the dynamics, that smoothly interpolates between the initial and terminal distributions.
    \begin{definition}{Absolutely continuous curve}
        A curve $\rho(t): [a, b] \mapsto \mathscr{P}_2(X)$ is called \emph{absolutely continuous} if $\exists m \in L^1([a,b])$ such that
        \[
            W_2^2(\rho(s), \rho(t)) \leq \int \limits_s^t m(r)dr,\quad \forall a \leq s < t \leq b.
       \]
    \end{definition}
    In $\mathscr{P}_2$ there is an important characterization of the absolutely continuous curves with the continuity equation.
    \begin{theorem}{Continuity equation~\cite[Theorem~8.3.1]{ambrosio2005gradient}}
        There exists a Borel vector field $v_t \in L^2(\rho_t, X):\ \|v_t\|_{L^2(\rho_t, X)} \leq |\rho_t'|$ and the continuity equation
        \[
            \partial_t \rho_t + \nabla(\rho_t v_t) = 0
        \]
        holds in the sense of distributions, i.e.
        \[
            \int\limits_{t_1}^{t_2}\int\limits_{X} \left(
                                                    \partial_t \varphi(x,t) + \left\langle v_t(x), \nabla_x \varphi (x,t) \right\rangle
                                                    \right)\diff \rho_t \diff t = 0,\quad \varphi \in \operatorname{Cyl}(X\times [t_1, t_2]).
        \]
    \end{theorem}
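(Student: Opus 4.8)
This is the classical characterization of absolutely continuous curves in $\mathscr{P}_2(X)$ from \cite[Chapter~8]{ambrosio2005gradient}, and the plan is to reconstruct the velocity field $v_t$ from the way the curve acts on test functions, then to package the bound $\|v_t\|_{L^2(\rho_t)} \le |\rho_t'|$ through a Riesz-type representation. First I would fix $\varphi \in C_c^\infty(X)$ and study the scalar curve $\ell_\varphi(t) := \int_X \varphi\,\diff\rho_t$. Taking for $s<t$ an optimal plan $\pi$ for $W_2(\rho_s,\rho_t)$ and estimating $|\ell_\varphi(t)-\ell_\varphi(s)| \le \int |\varphi(y)-\varphi(x)|\,\diff\pi \le \mathrm{Lip}(\varphi)\,W_2(\rho_s,\rho_t) \le \mathrm{Lip}(\varphi)\int_s^t m(r)\,\diff r$ shows that each $\ell_\varphi$ is absolutely continuous, hence differentiable at a.e. $t$; since a suitable countable subset $\mathcal D\subset C_c^\infty(X)$ is dense for uniform convergence of functions and of gradients, one can fix a single full-measure set of times at which $\ell_\varphi'$ exists for every $\varphi\in\mathcal D$ and at which the metric derivative $|\rho_t'|$ (which exists a.e. for any absolutely continuous curve) is defined.

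The next step is the sharp derivative bound. Using the same optimal $\pi$ and Cauchy--Schwarz I would split
\[
  |\ell_\varphi(t)-\ell_\varphi(s)| \le \Bigl(\int |y-x|^2\,\diff\pi\Bigr)^{1/2}\Bigl(\int \tfrac{|\varphi(y)-\varphi(x)|^2}{|y-x|^2}\,\diff\pi\Bigr)^{1/2},
\]
where the first factor is exactly $W_2(\rho_s,\rho_t)$. Letting $t\to s$, optimal plans concentrate on the diagonal, so by dominated convergence (with $\mathrm{Lip}(\varphi)$ as envelope) the second factor tends to $\bigl(\int_X|\nabla\varphi|^2\,\diff\rho_s\bigr)^{1/2}$; dividing by $t-s$ gives $|\ell_\varphi'(t)|\le |\rho_t'|\,\|\nabla\varphi\|_{L^2(\rho_t)}$ for a.e. $t$. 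Then, at each good time $t$, the assignment $\nabla\varphi\mapsto \ell_\varphi'(t)$ is well defined (the bound forces it to vanish whenever $\nabla\varphi=0$ in $L^2(\rho_t)$), linear, and bounded by $|\rho_t'|$ on the subspace $\{\nabla\varphi:\varphi\in\mathcal D\}\subset L^2(\rho_t;\mathbb R^d)$; extending by continuity and applying the Riesz representation theorem on the closure of that subspace yields $v_t$ with $\|v_t\|_{L^2(\rho_t)}\le |\rho_t'|$ and $\ell_\varphi'(t)=\int_X\langle v_t,\nabla\varphi\rangle\,\diff\rho_t$ for all $\varphi\in C_c^\infty(X)$.

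It remains to upgrade this pointwise-in-$t$ construction to a genuinely Borel field and to rewrite the identity in distributional form. For the former I would realize $v_t$ as the $L^2(\rho_t)$-limit of finite-dimensional least-squares projections built from $\mathcal D$, and check that the resulting map $(x,t)\mapsto v_t(x)$ is Borel; this is the main obstacle, since the Hilbert space $L^2(\rho_t)$ itself varies with $t$, so the measurable-selection argument has to be carried out with care. Once that is settled, for a time-dependent $\varphi\in\operatorname{Cyl}(X\times[t_1,t_2])$ the chain rule applied to the absolutely continuous map $t\mapsto\int_X\varphi(\cdot,t)\,\diff\rho_t$ gives $\frac{\diff}{\diff t}\int_X\varphi\,\diff\rho_t = \int_X\bigl(\partial_t\varphi+\langle v_t,\nabla_x\varphi\rangle\bigr)\,\diff\rho_t$ for a.e. $t$; integrating over $[t_1,t_2]$ and using that $\varphi$ is compactly supported in time eliminates the boundary contributions and produces exactly the stated weak continuity equation. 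Besides the measurable selection, the other delicate point is the limiting argument giving the sharp constant in the second step (concentration of optimal plans on the diagonal); everything else is routine bookkeeping.
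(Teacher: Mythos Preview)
The paper does not prove this theorem at all: it is stated as a background result and attributed to \cite[Theorem~8.3.1]{ambrosio2005gradient}, with the narrative immediately continuing to consequences of the continuity-equation characterization. So there is no ``paper's own proof'' to compare against.

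Your proposal is a faithful outline of the argument actually given in \cite[Chapter~8]{ambrosio2005gradient}: absolute continuity of $t\mapsto\int\varphi\,\diff\rho_t$ via the Kantorovich duality / Lipschitz bound, the sharp derivative estimate using optimal plans and their diagonal concentration, Riesz representation on the closure of gradients to produce $v_t$ with norm bounded by the metric derivative, a measurable-selection step for the Borel dependence, and finally the time integration to obtain the distributional continuity equation. The two points you flag as delicate (measurable selection across the $t$-dependent Hilbert spaces, and the limit identifying the second Cauchy--Schwarz factor with $\|\nabla\varphi\|_{L^2(\rho_t)}$) are exactly the places where the original proof expends the most effort, so your assessment is accurate.
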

    Under additional regularity assumptions on the vector field $v_t$ (cf.~\cite{ambrosio2006stability} or \cite[{Lemma {8.1.6}}]{ambrosio2005gradient}), there exists a diffeomorphism $X_t$ such that the solution admits the following characterization:
    \begin{gather}\label{eq:ode_characteristic_method}
        \rho_t = (X_t)_\sharp \rho_0, \notag \\
        \frac{\diff}{\diff t}X_t(x, t) = v_t(X_t(x, t), t),  \label{eq:diffeomorphism} \\
        X_t(x, 0) = x \notag
    \end{gather}
    In other words, a passage to the Lagrangian point of view can be made, in which the dynamics of individual particles is described.
    From the numerical point of view this means that given samples from $\rho_0$ a numerical model (approximating the interpolation of measures in the Eulerian sense) can be used to provide samples from the distribution $\rho_1$.
    
    Based on the interpretation of absolutely continuous curves via the continuity equation, one can reformulate the optimal transport  problem as a problem of finding the geodesic curve in the Wasserstein space.
    This leads to the \emph{dynamic optimal transport} problem due to Benamou and Brenier~\cite{benamou2000computational}:
    \begin{align}\label{eq:dynamic_OT}
        W^2_2(\rho_0, \rho_1) = &\min\limits_{v_t}\int\limits_0^1\int\limits_X \|v_t(x)\|^2\rho_t(x)\diff{x}\diff{t}, \\
                              &\operatorname{s.t. }   
                              \begin{aligned}\notag
                              \partial_t \rho_t + \nabla(\rho_t v_t) &= 0,\\
                                                     \rho_t(0) &= \rho_0 \\ 
                                                     \rho_t(1) &= \rho_1,
                              \end{aligned}
    \end{align}
    complemented by a no-outflow boundary condition in case $X$ has a boundary.

    \subsection{Minimizing movements scheme}
    The minimization of the target functional $\mathcal{E}$ can be performed iteratively via proximal steps,
    \begin{equation}\label{eq:JKO_scheme}
        \rho^{k+1} = \arg\min\limits_{\rho} \mathcal{E}(\rho) + \frac{1}{2T_k}W^2_2(\rho, \rho^k).
    \end{equation}
    This is the counterpart of the implicit gradient descent in the Wasserstein space. 
    If one is explicitly interested in the curve interpolating between successive steps is of interest, the dynamic formulation of the optimal transport problem can be used with~\eqref{eq:JKO_scheme}.
    One then arrives at the PDE-constrained minimization problem
    \begin{align}\label{eq:dynamic_JKO}        
        \rho^{k+1}=\arg&\min\limits_{\rho_t,v_t}\int\limits_0^1\int\limits_X \|v_t(x)\|^2\rho_t(x)\diff{x}\diff{t} + 2T_k \mathcal{E}(\rho_1), \\
                              &\operatorname{s.t.}\quad   
                              \begin{aligned}\notag
                              \partial_t \rho_t + \nabla(\rho_t v_t) &= 0,\\
                                                     \rho_t(0) &= \rho^k. 
                              \end{aligned}
    \end{align}

    Just as in the case of minimization over a linear space, convexity of the target functional allows to determine the convergence rate of the method.
    Unfortunately, it is known that $W^2_2$ is not convex along geodesics, which complicates the analysis. 
    Thus, a generalization is required leading to the notion of \emph{convexity along generalized geodesics}.
    \begin{definition}{Convexity along generalized geodesics~\cite[Definition~{9.2.4}]{ambrosio2005gradient}}        
        The functional $\mathcal{E}:\ \mathscr{P}_2(X) \to \mathbb{R}$ is said to be \emph{$\lambda$-convex along generalized geodesics} for some $\lambda \in \mathbb{R}$ if for any $\rho_1,\ \rho_2,\ \rho_3 \in D(\mathcal{E})$ there exists a plan 
        \begin{equation*}
            \boldsymbol{\rho} \in \Pi(\rho_1, \rho_2, \rho_3): p^{1,2}_\sharp\boldsymbol{\rho} \in \Pi_{\text{opt}}(\rho_1, \rho_2),\ p^{1,3}_\sharp\boldsymbol{\rho} \in \Pi_{\text{opt}}(\rho_1, \rho_3)
        \end{equation*}
        and a generalized geodesic, which is a curve of the form 
        \begin{equation*}
            \mu^{2\to 3}_t = \left(t\pi^2 + (1 - t)\pi^3 \right)_\sharp\boldsymbol{\rho} 
        \end{equation*}
        such that
        \begin{equation}\label{eq:agg_convexity}
            \mathcal{E}(\rho_t^{2\to3}) \leq (1 - t)\mathcal{E}(\rho_2) 
                    + t\mathcal{E}(\rho_3)
                    - \frac{1}{2}\lambda t(1 - t) W^2_{\boldsymbol{\rho}}(\rho_2, \rho_3).
        \end{equation}
        Here, $D$ is the domain of the functional, $\Pi(\rho_1, \rho_2, \rho_3)$ are probability distributions over $X^3$ with one-dimensional marginals $\rho_i$, $i\in\{1,2,3\}$, $\Pi_\text{opt}(\rho_i, \rho_j)$ is the set of optimal plans between $\rho_i$ and $\rho_j$ (in the sense of \eqref{eq:wasserstein_distance}), $p^i$ is the projection onto the $i$-th variable, i.e. $p^i(x_1, \dots, x_K) = x_i$, and $W^2_{\boldsymbol{\rho}}(\rho_2, \rho_3)$ is defined as
        \begin{equation*}
            W^2_{\boldsymbol{\rho}}(\rho_2, \rho_3) := \int_{X^3}|x_2 - x_3|^2 d\boldsymbol{\rho}(x_1, x_2, x_2) \geq W^2_2(\rho_2, \rho_3).
        \end{equation*}
    \end{definition}
    The convexity of $\operatorname{KL}(\cdot | \rho_\infty)$ depends on the properties of the target distribution.
    A characterization can be obtained as a straightforward corollary of Propositions~{9.3.2} and~{9.3.9} of \cite{ambrosio2005gradient}:
    \begin{corollary}
        Let $X = \mathbb{R}^d$.
        Let the target density have the form $\rho_\infty = e^{-V}\operatorname{d}\mathcal{L}^d$ with the Lebesgue measure ${L}^d$ and $V: \mathbb{R}^d \to \mathbb{R}$ is $\lambda$-strongly convex for some $\lambda \geq 0$.
        Then the functional $\operatorname{KL}(\cdot | \rho_\infty)$ is $\lambda$-convex along generalized geodesics.
    \end{corollary}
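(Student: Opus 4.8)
The plan is to decompose $\operatorname{KL}(\cdot|\rho_\infty)$ into an internal energy term and a potential energy term, apply the two cited propositions of \cite{ambrosio2005gradient} to the two terms separately, and then add the resulting convexity inequalities.

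First I would record the decomposition. Identifying an absolutely continuous measure with its Lebesgue density and writing $\rho_\infty = e^{-V}\mathcal{L}^d$, one has $\log(\rho/\rho_\infty) = \log\rho + V$ pointwise, hence
\begin{equation*}
    \operatorname{KL}(\rho|\rho_\infty) = \int_{\mathbb{R}^d}\rho\log\rho\,\mathrm{d}x + \int_{\mathbb{R}^d}V\,\mathrm{d}\rho =: \mathcal{H}(\rho) + \mathcal{V}(\rho),
\end{equation*}
where $\mathcal{H}$ is the internal energy functional with integrand $F(s)=s\log s$ (with the convention $F(0)=0$) and $\mathcal{V}$ is the potential energy with potential $V$. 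On $\mathscr{P}_2(\mathbb{R}^d)$ both summands are bounded below and take values in $(-\infty,+\infty]$ — $\mathcal{H}$ by comparison with a Gaussian, $\mathcal{V}$ because a convex $V$ is minorised by an affine function and affine functions are $\rho$-integrable for $\rho\in\mathscr{P}_2(\mathbb{R}^d)$ — so the identity is meaningful on $D(\operatorname{KL}(\cdot|\rho_\infty))$.

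Then I would treat the two terms separately. For the internal energy: one of the cited propositions states that if $F(0)=0$ and the map $s\mapsto s^d F(s^{-d})$ is convex and non-increasing on $(0,\infty)$ (McCann's condition), then $\mathcal{H}$ is convex — i.e. $0$-convex — along every generalized geodesic. For $F(s)=s\log s$ this map equals $s^d\cdot s^{-d}\log(s^{-d}) = -d\log s$, whose second derivative $d/s^2$ is positive and whose first derivative $-d/s$ is negative, so the condition holds. For the potential energy: the other cited proposition states that if $V$ is (proper, lower semicontinuous and) $\lambda$-convex, then $\mathcal{V}$ is $\lambda$-convex along every generalized geodesic, which is exactly the hypothesis assumed here.

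Finally I would combine the two estimates. Fixing $\rho_1,\rho_2,\rho_3\in D(\operatorname{KL}(\cdot|\rho_\infty))$, a generalized geodesic $\mu_t^{2\to3}$ is specified by a three-plan $\boldsymbol{\rho}\in\Pi(\rho_1,\rho_2,\rho_3)$ whose $(1,2)$- and $(1,3)$-marginals are optimal, and this choice does not depend on which functional one considers; hence a single admissible plan $\boldsymbol{\rho}$ may be used in both propositions. Along the associated curve the internal energy obeys \eqref{eq:agg_convexity} with constant $0$ and the potential energy obeys it with constant $\lambda$, and the term $W_{\boldsymbol{\rho}}^2(\rho_2,\rho_3)$ is the same in both; adding the two inequalities yields
\begin{equation*}
    \operatorname{KL}(\mu_t^{2\to3}|\rho_\infty) \le (1-t)\operatorname{KL}(\rho_2|\rho_\infty) + t\operatorname{KL}(\rho_3|\rho_\infty) - \frac12\lambda t(1-t)W_{\boldsymbol{\rho}}^2(\rho_2,\rho_3),
\end{equation*}
which is precisely $\lambda$-convexity of $\operatorname{KL}(\cdot|\rho_\infty)$ along generalized geodesics. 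I do not anticipate a serious obstacle; the step most in need of care is the observation that one admissible plan serves both functionals at once, so that the two convexity constants add — and this is available precisely because each cited proposition delivers its estimate along an \emph{arbitrary} generalized geodesic, not only along one adapted to that functional. Verifying McCann's condition and the finiteness and integrability of the two summands is routine.
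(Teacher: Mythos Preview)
Your proposal is correct and follows exactly the same route as the paper: decompose $\operatorname{KL}(\cdot|\rho_\infty)$ into the internal energy $\int\rho\log\rho$ and the potential energy $\int V\,\mathrm{d}\rho$, invoke Propositions~9.3.9 and~9.3.2 of \cite{ambrosio2005gradient} for $0$- and $\lambda$-convexity of the two pieces respectively, and add the resulting inequalities along a common generalized geodesic. Your version is in fact more careful than the paper's --- you explicitly verify McCann's condition for $F(s)=s\log s$, check finiteness of the summands, and spell out why a single plan $\boldsymbol{\rho}$ serves both estimates --- whereas the paper simply cites the two propositions and sums.
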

    \begin{proof}
        The $\operatorname{KL}(\cdot | \rho_\infty)$ functional decomposes into two terms:
        \begin{equation*}
            \operatorname{KL}(\rho|\rho_\infty) = \int \log\left( \frac{\rho}{\rho_\infty}\right)\mathrm{d}\rho =
                \int \log{\rho}\mathrm{d}\rho + \int V\mathrm{d}\rho
                = \mathcal{E}_\text{int}(\rho) + \mathcal{E}_\text{pot}(\rho).
        \end{equation*}
        Due to Proposition~{9.3.9}, the internal energy term $\mathcal{E}_\text{int}$ is geodesically convex ($\lambda$ = 0).
        Due to Proposition~{9.3.2}, the potential energy $\mathcal{E}_\text{pot}$ is $\lambda$-convex along any interpolating curve.
        Thus, the inequalities of type \eqref{eq:agg_convexity} can be summed, leading to $\lambda$-convexity of $\operatorname{KL}(\cdot | \rho_\infty)$.
    \end{proof}
    If the target functional is $\lambda$-convex along generalized geodesics, the sequence generated by the Minimizing Movement scheme can be characterized by \emph{Evolution Variational Inequalities}.
    \begin{theorem}{Variational inequalities, see \cite[{Theorem~{4.1.2}}]{ambrosio2005gradient}}
        Let $\mathcal{E}$ be $\lambda$-convex along generalized geodesics.
        For any $\rho \in \overline{D(\mathcal{E})},\ T: \lambda T > -1 $
        \begin{itemize}
            \item the proximal minimization problem \eqref{eq:JKO_general_definition} has a unique solution
                \begin{equation*}
                    \rho_{T} = \arg\min_{\tilde\rho} \frac{1}{2T}W^2_2(\tilde\rho, \rho) + \mathcal{E}(\tilde\rho) 
                \end{equation*}
            \item for any other $\tilde\rho \in D(\mathcal{E})$,
                \begin{equation}\label{eq:EVI}
                    \frac{1}{2T}W^2_2(\rho_T, \tilde\rho) - \frac{1}{2T}W^2_2(\rho, \tilde\rho) + \frac{1}{2}\lambda W^2_2(\rho_T, \tilde\rho) \leq \mathcal{E}(\tilde\rho) - \mathcal{E}_T(\rho),
                \end{equation}
            where 
                \begin{equation*}
                    \mathcal{E}_T(\rho) = \frac{1}{2T}W^2_2(\rho_T, \rho) + \mathcal{E}(\rho_T) \geq \mathcal{E}(\rho_T).
                \end{equation*}
        \end{itemize}
    \end{theorem}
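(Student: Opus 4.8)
The statement coincides with the conclusion of \cite[Theorem~4.1.2]{ambrosio2005gradient} once $\mathcal{E}$ is known to be $\lambda$-convex along generalized geodesics (which the preceding corollary supplies for $\mathcal{E}=\operatorname{KL}(\cdot|\rho_\infty)$), and the plan is to reproduce that argument here. Throughout, write $\Phi_T(\cdot):=\frac{1}{2T}W_2^2(\cdot,\rho)+\mathcal{E}(\cdot)$ for the functional to be minimized. First I would prove existence of a minimizer by the direct method: $\mathcal{E}$ is proper and narrowly lower semicontinuous, and its $\lambda$-convexity along generalized geodesics, combined with $\lambda T>-1$, forces $\Phi_T$ to be bounded below with sublevel sets bounded in $W_2$; such sets are tight, hence narrowly relatively compact, so a minimizing sequence admits a narrowly convergent subsequence whose limit $\rho_T$ realizes the infimum.

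For uniqueness — and, as it turns out, also for the variational inequality — the crucial observation is that although $\sigma\mapsto W_2^2(\sigma,\rho)$ is \emph{not} convex along $W_2$-geodesics in general, it \emph{is} $1$-convex along any generalized geodesic whose base point is chosen to be $\rho$ itself (cf.~\cite[\S9.2]{ambrosio2005gradient}). Consequently, along the generalized geodesic $\mu_t$ based at $\rho$ joining any $\rho_2,\rho_3\in D(\mathcal{E})$, with associated $3$-plan $\boldsymbol\rho$,
\[
    \Phi_T(\mu_t)\;\leq\;(1-t)\,\Phi_T(\rho_2)+t\,\Phi_T(\rho_3)-\tfrac12\bigl(\tfrac1T+\lambda\bigr)\,t(1-t)\,W^2_{\boldsymbol\rho}(\rho_2,\rho_3),
\]
obtained by adding the $\frac1T$-convexity of $\frac{1}{2T}W_2^2(\cdot,\rho)$ to the $\lambda$-convexity of $\mathcal{E}$. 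Since $\tfrac1T+\lambda>0$ and $W^2_{\boldsymbol\rho}(\rho_2,\rho_3)\geq W_2^2(\rho_2,\rho_3)>0$ whenever $\rho_2\neq\rho_3$, the map $\Phi_T$ is strictly convex along such curves, so two distinct minimizers would be strictly improved at the midpoint of the connecting generalized geodesic, a contradiction.

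To get \eqref{eq:EVI}, I would argue variationally. Fix $\tilde\rho\in D(\mathcal{E})$, take the generalized geodesic $\mu_t$ based at $\rho$ running from $\rho_T$ at $t=0$ to $\tilde\rho$ at $t=1$, and use minimality, $\Phi_T(\rho_T)\leq\Phi_T(\mu_t)$ for all $t\in(0,1]$. Plugging in the displayed bound with $\rho_2=\rho_T$, $\rho_3=\tilde\rho$, cancelling the term $(1-t)\Phi_T(\rho_T)$ and dividing by $t$ gives
\[
    \Phi_T(\rho_T)\;\leq\;\Phi_T(\tilde\rho)-\tfrac12\bigl(\tfrac1T+\lambda\bigr)(1-t)\,W^2_{\boldsymbol\rho}(\rho_T,\tilde\rho).
\]
Letting $t\downarrow0$ and then replacing $W^2_{\boldsymbol\rho}(\rho_T,\tilde\rho)$ by the smaller quantity $W_2^2(\rho_T,\tilde\rho)$ (allowed because its coefficient $\tfrac12(\tfrac1T+\lambda)$ is positive), I would expand $\Phi_T$ and rearrange, obtaining $\frac{1}{2T}W_2^2(\rho_T,\tilde\rho)-\frac{1}{2T}W_2^2(\rho,\tilde\rho)+\tfrac{\lambda}{2}W_2^2(\rho_T,\tilde\rho)\leq\mathcal{E}(\tilde\rho)-\mathcal{E}_T(\rho)$, which is exactly \eqref{eq:EVI}; the accompanying inequality $\mathcal{E}_T(\rho)\geq\mathcal{E}(\rho_T)$ is immediate since $W_2^2\geq0$.

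The main obstacle is conceptual rather than computational: it is the realization that the generalized geodesics must be \emph{based at the reference measure $\rho$}, since this is precisely what turns the otherwise non-convex term $W_2^2(\cdot,\rho)$ into a $1$-convex one along that specific family and hence lets $\Phi_T$ inherit a strictly positive modulus of convexity exactly under the threshold $\lambda T>-1$. The surrounding technical points — coercivity and narrow lower semicontinuity for the existence step, and the existence of the $3$-plan $\boldsymbol\rho$ with both relevant two-dimensional marginals optimal — are standard once $\mathcal{E}$ is proper and $\lambda$-convex along generalized geodesics, but they have to be verified carefully in $\mathscr{P}_2(\mathbb{R}^d)$.
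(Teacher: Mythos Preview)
Your proposal is correct and is precisely the argument of \cite[Theorem~4.1.2]{ambrosio2005gradient}; the paper itself does not give a proof of this statement but simply quotes the result from that reference. Your sketch captures the essential mechanism---basing the generalized geodesic at $\rho$ so that $W_2^2(\cdot,\rho)$ becomes $1$-convex along it, whence $\Phi_T$ is $(\tfrac{1}{T}+\lambda)$-convex and the EVI follows by comparing $\Phi_T(\rho_T)$ with $\Phi_T(\mu_t)$---and the algebra you perform checks out.
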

    \begin{corollary}
        Let $\mathcal{E} = \operatorname{KL}(\cdot | \rho_\infty)$ be $\lambda$-convex along generalized geodesics.
        Then $\rho_\infty$ is a global minimum of the functional.
        For any $\rho$ and denoting $\tilde\rho = \rho_\infty$, one can see that
        \begin{equation*}
            W_2(\rho_T, \rho_\infty) \leq \frac{1}{\sqrt{1 + \lambda T}}W_2(\rho, \rho_\infty).
        \end{equation*}
        For the sequence generated by the minimizing movement scheme $\rho^{k+1} = \left(\rho^{k}\right)_{T_k}$ a convergence rate can be estimated by
        \begin{equation*}
            W_2(\rho^N, \rho_\infty) \leq \prod\limits_{k=1}^N \frac{1}{\sqrt{1 + \lambda T_k}}W_2(\rho^0, \rho_\infty).
        \end{equation*}
    \end{corollary}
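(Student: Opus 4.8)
The plan is to obtain the contraction estimates directly from the Evolution Variational Inequality \eqref{eq:EVI}, after first establishing that $\rho_\infty$ minimizes $\mathcal{E}$ by an elementary argument. For the latter I would use the classical nonnegativity of relative entropy: writing $\mathcal{E}(\rho) = \operatorname{KL}(\rho|\rho_\infty) = \int (-\log)(\rho_\infty/\rho)\,\diff\rho$ and applying Jensen's inequality to the convex function $t\mapsto -\log t$ against the probability measure $\rho$ gives $\mathcal{E}(\rho) \ge -\log\int (\rho_\infty/\rho)\,\diff\rho = -\log 1 = 0$, whereas $\mathcal{E}(\rho_\infty) = \int \log 1\,\diff\rho_\infty = 0$. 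Hence $\mathcal{E}(\rho) \ge 0 = \mathcal{E}(\rho_\infty)$ for every $\rho \in \mathscr{P}_2(X)$; in particular $\mathcal{E}(\rho_\infty) < \infty$, so $\rho_\infty \in D(\mathcal{E})$ is an admissible test measure in what follows.

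For the one-step estimate I would apply \eqref{eq:EVI} with the test measure $\tilde\rho = \rho_\infty$; this is allowed because $\lambda \ge 0$ forces $\lambda T > -1$ for every $T > 0$. The inequality then reads
\begin{equation*}
    \frac{1}{2T}W_2^2(\rho_T,\rho_\infty) - \frac{1}{2T}W_2^2(\rho,\rho_\infty) + \frac{\lambda}{2}W_2^2(\rho_T,\rho_\infty) \le \mathcal{E}(\rho_\infty) - \mathcal{E}_T(\rho),
\end{equation*}
and its right-hand side is nonpositive: indeed $\mathcal{E}_T(\rho) \ge \mathcal{E}(\rho_T) \ge \mathcal{E}(\rho_\infty)$ by the monotonicity built into the definition of $\mathcal{E}_T$ together with the global minimality just proved. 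Discarding the nonpositive right-hand side and collecting the two terms carrying $W_2^2(\rho_T,\rho_\infty)$ yields $\bigl(\tfrac{1}{2T}+\tfrac{\lambda}{2}\bigr)W_2^2(\rho_T,\rho_\infty) \le \tfrac{1}{2T}W_2^2(\rho,\rho_\infty)$, i.e. $(1+\lambda T)\,W_2^2(\rho_T,\rho_\infty) \le W_2^2(\rho,\rho_\infty)$, and taking square roots (note $1+\lambda T > 0$) gives exactly the claimed bound.

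For the minimizing movement sequence I would simply iterate this bound: applying it with $\rho = \rho^k$, $T = T_k$, and $\rho_T = \rho^{k+1} = (\rho^k)_{T_k}$ gives $W_2(\rho^{k+1},\rho_\infty) \le (1+\lambda T_k)^{-1/2}\,W_2(\rho^k,\rho_\infty)$, and a one-line induction over $k$ chains these into the stated product estimate (the range of the index in the displayed product is purely a matter of how the successive step sizes are labelled). I do not anticipate any real obstacle; the only places needing care are the verification of the admissibility hypotheses of \eqref{eq:EVI} — finiteness of $\mathcal{E}(\rho_\infty)$ and $\lambda T > -1$ — and the sign of the right-hand side of \eqref{eq:EVI}, which is precisely where the global minimality of $\rho_\infty$ is invoked.
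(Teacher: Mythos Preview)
Your proposal is correct and follows precisely the approach the paper intends: the corollary is stated without a separate proof, but its very wording (``denoting $\tilde\rho = \rho_\infty$, one can see that\ldots'') signals that the one-step estimate is obtained by substituting $\tilde\rho = \rho_\infty$ into the EVI \eqref{eq:EVI} and dropping the nonpositive right-hand side, after which the product bound follows by iteration. Your added details --- the Jensen argument for $\operatorname{KL}\ge 0$, the check that $\rho_\infty\in D(\mathcal{E})$, and the observation $\mathcal{E}_T(\rho)\ge\mathcal{E}(\rho_T)\ge\mathcal{E}(\rho_\infty)$ --- are exactly what is needed to make the paper's sketch rigorous.
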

    We have to admit that the standard analysis presented above is rather incomplete because, for example, the convexity property for the $\operatorname{KL}$ does not hold in case of a multimodal target measure.
    One also has to take the suboptimality of the solution into account.

\section{Entropic regularization of the Minimizing Movements scheme}
    To obtain a numerical solution it is quite common to regularize the computation of the Wasserstein distance (\eqref{eq:wasserstein_distance}) by adding a term, which makes the problem strongly convex.
    Although other approaches for example based on quadratic regularization~\cite{lorenz2021quadratically} and Rényi divergences~\cite{bresch2024interpolating} exist, the most prominent approach is \emph{entropic optimal transport}~\cite{nutz2022entropic}.
    At the cost of introducing a small bias controlled by the regularization parameter, one gets a strongly convex problem.
    When $\rho_{0,1}$ are two empirical measures (e.g. in a setting of comparing two point clouds as is popular in data science and machine learning), this enjoys better asymptotic complexity than the linear program solution required for the unregularized problem~\cite{dvurechensky2018computational}.
    Additionally, the regularized problem allows for efficient parallel implementation with the Sinkhorn algorithm~\cite{cuturi2013sinkhorn}.

    Inspired by the previous works~\cite{li2020fisher,li2023kernel} we introduce the regularization into the dynamic reformulation of the problem (\ref{eq:dynamic_JKO}),
    \begin{align}\label{eq:reg_dyn_JKO}
        \rho^{k+1} =  &\arg\min\limits_{v,\rho }\int\limits_0^{T_k}\int\limits_X {\frac{1}{2}\|v(t,x)\|^2}{\rho(t, x)}\diff{x}\diff{t} + \mathcal{E}(\rho({T_k},\cdot)), \\
                &\operatorname{s.t.}\quad   
                \begin{aligned}\notag
                    \partial_t \rho + \nabla(\rho v) &= \beta_k\operatorname{\Delta}\rho,\\
                    \rho(0, x) &= \rho^{k}(x).
                \end{aligned}
    \end{align}
    Essentially a diffusion term is added to the flow constraint.
    The parameter $\beta_k > 0$ controls the strength of the regularization.
    In \cite{li2020fisher} it is shown that this is in fact equivalent to adding entropic and Fisher information regularizing terms to the target functional.
    The equivalence between the dynamic and static regularized OT formulations is explained in~\cite{chen2016relation}, which can be obviously extended to the Minimizing Movements scheme.
    As will be evident from the derivation below, this particular form of regularization allows to reformulate the problem as a coupled PDE system.
    Using a fixed-point method is an obvious approach for the solution, similar to the well-known Sinkhorn algorithm and well suited for the implementation with Tensor Train decompositions.
    
    \subsection{Derivation of the method}
    
    We introduce a dual variable (as Lagrange multiplier) for the continuity equation constraint $\Phi(x, t)$.
    Writing down the Lagrangian and the optimality conditions one can see that $v = \nabla\Phi$ and it holds the system
    \begin{align}
        &\partial_t \rho + \nabla(\rho\nabla\Phi) = \beta_k\Delta\rho \label{eq:RegWass_proximal_rho},\\
        &\partial_t \Phi + \frac{1}{2}\|\nabla \Phi\|^2 = -\beta_k\Delta\Phi \label{eq:RegWass_proximal_Phi},\\
        &\rho(0, x) = \rho^k(x),\quad \Phi(T_k, x) = -\delta\mathcal{E}(\rho(T_k,\cdot), x). \label{eq:RegWass_proximal_end}
    \end{align}
    Here, $\delta \mathcal{E}$ is the first variation of the functional $\mathcal{E}$.
    This can be seen as a primal-dual formulation of the problem.
    For a general energy we see that the terminal condition has nonlinear dependence on $\Phi,\ \rho$.

   With a change of variables in terms of the (\emph{Hopf-Cole transform})
    \begin{equation}\label{eq:Hopf-Cole}
        \eta(t,x) = e^{\frac{\Phi(t,x)}{2\beta_k}},\quad 
        \hat{\eta}(t,x) = \rho(t,x)e^{\frac{-\Phi(t,x)}{2\beta_k}} 
    \end{equation}
    yielding
    \begin{equation}\label{eq:Hopf-Cole_inverse}
        \Phi = 2\beta_k\log\eta,\quad \rho = \eta\hat{\eta},
    \end{equation}
    the system of a forward-in-time Fokker-Planck \eqref{eq:RegWass_proximal_rho} and a backward-in-time Hamilton-Jacobi \eqref{eq:RegWass_proximal_Phi} transforms into a system of two heat equations, one forward and one backward in time.
    It is given by
    \begin{align}\label{eq:heat_system_general}
        &\partial_t \hat{\eta} = \beta_k\Delta\hat{\eta} \\
        &\partial_t {\eta} = -\beta_k\Delta\eta \\
        &\hat{\eta}(0, x) = \frac{\rho^k(x)}{\eta(0, x)}\\ 
        &\eta(T_k,x) = e^{-\frac1{2\beta_k}{\delta\mathcal{E}(\eta(T_k, \cdot), \hat\eta(T_k, \cdot), x)}}. \label{eq:heat_system_general_end}
    \end{align}

    \subsection{Fixed-point formulation of the proximal step}\label{ss:fp_equations}
    We argue that system~\eqref{eq:heat_system_general}-\eqref{eq:heat_system_general_end} is more suitable for the numeric solution than~\eqref{eq:RegWass_proximal_rho}-\eqref{eq:RegWass_proximal_end}.
    Firstly, now two similar linear PDEs are considered.
    The solution of the general initial-value problem for the heat equation can be obtained by a convolution with the fundamental solution, namely
    \begin{gather}
        \hat\eta(T_k, \cdot) = H_d(\beta_k T_k) \hat\eta(0, \cdot) \label{eq:heat_oper} \\
        \eta(0, \cdot) = H_d(\beta_k T_k) \eta(T_k, \cdot),     \label{eq:heat_oper_end}
    \end{gather}
    where $H_d: L_1(\mathbb{R}^d) \times \mathbb{R}^{+} \to L_1(\mathbb{R^d})$ is the integral operator such that
    \begin{gather}
        \left(H_d(s)u\right)(x) = \int\limits_{\mathbb{R}^d} G_d(s, y - x) u(y) \label{eq:heat_operator_d} \\
        G_d(s, \xi) = \frac{1}{(4\pi s)^{\frac{d}{2}}} e^{-\frac{1}{4 s}\|\xi\|^2}.
    \end{gather}
    
    We suggest that the problem of satisfying the whole system together with the initial and the terminal conditions lends itself better for a reformulation as a fixed-point iteration.
    Consider the following steps for some  $\eta_m$:
    \begin{align}
        \eta_{m,0} &= H_d(\beta_k T_k) \eta_{m}, \label{eq:fp_general} \\
        \hat{\eta}_{m,0} &= \frac{\rho^k(x)}{\eta_{m,0}}, \label{eq:fp_initial} \\ 
        \hat{\eta}_{m} &= H_d(\beta_k T_k) \hat{\eta}_{m,0}, \label{eq:fp_heat_hat_eta} \\
        \tilde\eta_m &= e^{-\frac1{2\beta_k}{\delta\mathcal{E}(\tilde\eta_m, \hat\eta_m, \cdot)}}.  \label{eq:fp_general_end}
    \end{align}
    If $(\eta^*, \hat\eta^*)$ were the solution of \eqref{eq:heat_system_general}-\eqref{eq:heat_system_general_end} with $\eta_m = \eta^*(T_k, \cdot)$, then $\eta_{m, 0} = \eta^*(0, \cdot),\ \hat\eta_{m, 0} = \hat\eta^*(0,\cdot),\ \hat\eta_m = \hat\eta^*(T_k, \cdot)$ and $\eta_m$ is the fixed point of the operator $G$, implicitly defined by \eqref{eq:fp_general}-\eqref{eq:fp_general_end}
    \begin{equation*}
        \tilde\eta_m = G(\eta_m) = \eta_m.
    \end{equation*}
    We propose to reformulate the coupling problem as a classical Picard iteration $\eta_{m + 1} = \tilde\eta_m,\ m = 0,\ 1, \dots$, starting from some initial guess $\eta_0$ and computing successive iterates until $\eta_m$ and $\tilde\eta_m$ are sufficiently close in some metric.
    The condition \eqref{eq:fp_general_end} is itself a nonlinear equation since the variation of energy depends on $\tilde\eta_m$.
    For general energy functional this can be relaxed by taking the value of $\eta$ from the previous iteration, i.e.,
    \begin{equation*}
        \tilde\eta_m = e^{-\frac1{2\beta_k}{\delta\mathcal{E}(\eta_m, \hat\eta_m, x)}}.
    \end{equation*}
    However, in the case of $\operatorname{KL}$ divergence as considered in the current work, one can see that
    \begin{equation*}
        \delta\operatorname{KL}(\rho|\rho_\infty)(x) = \log{\rho} - \log{\rho_\infty}  + 1 + \text{const.}.
    \end{equation*}
    Here, the constant appears because the target density $\rho_\infty$ is only known up to a multiplicative constant. 
    We note that adding a constant to the target functional does not change the $\arg\min$ in the minimization problem. 
    Thus we assume that we can minimize the functional $\operatorname{KL}(\cdot | \rho_\infty) + C$ with $C$ chosen in such a way that the constant in the $\delta \operatorname{KL}$ term cancels out.
    In conclusion, one can explicitly rewrite $\tilde\eta_m$ in \eqref{eq:fp_general_end} as a function of already known variables by
    \begin{equation}
        \tilde\eta_m = \left(\frac{\rho_\infty}{\hat\eta_m} \right)^{\frac{1}{1 + 2\beta_k}}. \label{eq:fp_terminal_KL}
    \end{equation}

    The fixed-point cycle defined above is schematically depicted in Figure~\ref{fig:fp_cycle}.
    We point out that formally replacing $\beta_k T_k$ with some positive $\epsilon$ and $\beta_k = 0$ yields the Sinkhorn algorithm~\cite{chen2016entropic}.
    \begin{figure}[ht]
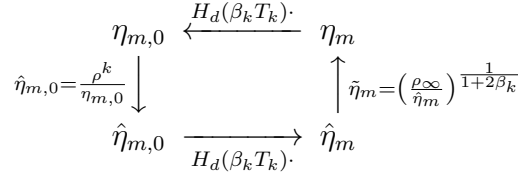

        \centering
        \[ \large
        \begin{CD}
            \eta_{m, 0}           @<{H_d(\beta_k T_k) \cdot }<<    \eta_{m} \\
            @V{\hat\eta_{m, 0}} = \frac{\rho^k}{\eta_{m,0}}VV             @AA{\tilde\eta_{m} = \left(\frac{\rho_\infty}{\hat\eta_{m}} \right)^{\frac{1}{1 + 2\beta_k}} 
}A \\
            \hat\eta_{m, 0}        @>>{H_d(\beta_k T_k) \cdot }>     \hat\eta_{m}
        \end{CD}
        \]
        \caption{Schematic of proposed fixed-point algorithm.}
        \label{fig:fp_cycle}
    \end{figure}

    For a system that converged (to a prescribed accuracy) after $M$ fixed-point iterations, the next density is given by
    \begin{equation}\label{eq:next_density}
        \rho^{k+1}(x) \approx \eta_M(x) \cdot \hat\eta_M(x).
    \end{equation}
    Additionally, one can determine the drift term of the Fokker-Planck equation describing the flow in~\eqref{eq:reg_dyn_JKO} from the optimality condition $v = \nabla_x \Phi$ by
    \begin{align}\label{eq:SDE_drift}
        v(x, t) &= 2 \beta_k \nabla_x \log{\eta(t, x)},\\ 
        \eta(t, x) &= (H^d(\beta_k (T - t)) \eta_{M})(x). \notag
    \end{align}
    Alternatively, using that
    \begin{multline*}
        \partial_t \rho + \nabla(\rho \nabla \Phi) - \beta_k \Delta \rho = \\ 
            =  \partial_t \rho + \nabla(\rho \nabla \Phi) - \beta_k \nabla \cdot (\nabla \rho)
            =  \partial_t \rho + \nabla(\rho \nabla \Phi) - \beta_k \nabla \cdot (\rho \nabla \log \rho) = \\
            =  \partial_t \rho + \nabla\left(\rho \nabla (\Phi - \beta_k \log \rho)\right) 
    \end{multline*} 
    and recalling the Hopf-Cole transform \eqref{eq:Hopf-Cole_inverse}, one can get an equivalent deterministic dynamics of the form
    \begin{equation*}\label{eq:continuity_equation_vtilde}
        \partial_t \rho + \nabla(\rho \tilde v) = 0 
    \end{equation*}
    with the drift term $\tilde v$ given by
    \begin{align}\label{eq:ODE_drift}
        \tilde v(t, x) &= \beta_k \nabla_x \left(\log \eta(t, x) - \log\hat\eta(t, x)\right), \\
        \hat\eta(t, x) &= \left(H_d(\beta_k t) \hat\eta_{0, M} \right)(x),\quad \eta(t, x) = \left( H_d(\beta_k (T_k - t))\eta_{M} \right)(x). \notag
    \end{align}

    \subsection{Convergence of the Minimizing Movements with regularized proximal}
    Convergence of the scheme~\eqref{eq:reg_dyn_JKO} is -- to the knowledge of the authors -- predominantly studied in the context of convergence of the discrete sequence $\{\rho^k \}_{k = 1}^\infty$ to the continuous solution of the gradient flow for the original energy functional $\mathcal{E}$ as both the time step and regularization parameter approach zero.
    In~\cite{carlier2017convergence} it is shown that in order to approximate the original gradient flow, the regularization has to decay sufficiently faster than the time step size.
    However, we are more interested in the rate of convergence of the discrete sequence to the minimizer of the energy functional, given a predefined sequence of steps and regularizations and possibly also making an informed choice of such a sequence.
    This problem can be attacked in the following way.
    In~\cite{cheng2023convergence} the EVI similar to~\eqref{eq:EVI} is derived for a sequence $\{ \rho^k \}$, where each term is not an exact solution of~\eqref{eq:JKO_general_definition} but a suboptimal one.
    To make this precise, we assume that the Wasserstein subgradient of the proximal functional at $\rho_{k+1}$ is bounded by some predefined sequence $\Xi_k$, i.e.,
    \begin{align*}
        \mathcal{F}_{k+1}(\rho) &:= \mathcal{E}(\rho) + \frac{1}{2\tau_k}W_2^2(\rho, \rho_k), \\
        \rho_{k+1}&: \exists\ \xi_{k+1} \in \partial_{W_2} \mathcal{F}_{k+1}(\rho_{k+1})\quad\text{s.t.}\quad \|\xi_{k+1}\|_{L_2(\rho_{k+1})} \leq \Xi_k.
    \end{align*}
    Then (again in case of a functional $\lambda$-convex along generalized geodesics) we deduce that
    \begin{equation}\label{eq:EVI_suboptimal}
        \left(1 + \frac{T_k\lambda}{2} \right)W^2_2(\rho_{k+1}, \rho_\infty) + 2T_k\left( \mathcal{E}(\rho_{k+1}) - \mathcal{E}(\rho_{\infty})\right) \leq W^2_2{(\rho_k, \rho_\infty)} + \frac{2T_k}{\lambda}\Xi_k^2.
    \end{equation}
    If applied by induction for instance for uniformly bounded $\Xi_k \leq \Xi$ and constant timestep $T_k = T$, it yields the linear convergence rate (\cite[Theorem 4.3]{cheng2023convergence})
    \begin{equation*}
        W^2_2(\rho_{k+1}, \rho_\infty) \leq \left(1 + \frac{T\lambda}{2}\right)^{-k} W^2_2(\rho_0, \rho_\infty) + \frac{4\Xi^2}{\lambda^2}.
    \end{equation*}
    We hypothesize that the subgradient norm $\Xi_k$ can be controlled by a proper choice of $(T_k, \beta_k)$ but leave this to future studies.
    
\section{Numerical solution}
    The main challenge for a numerical implementation of the method in the Eulerian approach is the number of degrees of freedom in the discretization, which is growing exponentially with the dimension $d$ of the problem. 
    We argue that this can be overcome by utilizing low-rank tensor methods, assuming that the problem exhibits some low-rank structure.
    This section provides the details on the low-rank methods used for the implementation.

    \subsection{Discretization and Tensor Train compression} 
    For a proof-of-concept implementation we consider a <<sufficiently large>> cube $\bigotimes_{k=1}^d[L_k; R_k]$ and a finite difference approximation on a regular grid with $N_k$ nodes in each dimension, namely
    \begin{equation*}
        h_k = \frac{R_k - L_k}{N_k},\qquad x_{k, i} = -L_k + h_k\cdot i.
    \end{equation*}
    A tensor with function values $g_\alpha$ with multiindex $\alpha$ is defined by
    \begin{equation*}
        g_{i_1\dots i_d} = g(x_{1,i_1}, \dots, x_{d,i_d}),
    \end{equation*}
    where $g$ is one of $\eta_m,\ \eta_{0, m},\ \hat\eta_{0, m},\ \hat\eta_{m}$.
    If the dimension $d$ of the problem is high, the amount of memory required to store all the values of the tensor increases exponentially, rendering it prohibitively complex in practice. 
    Hence, some compression approach has to be used and in the present work we focus on the Tensor Train format
    
    \begin{definition}{Tensor Train format}   
    A tensor $g_\alpha \in \mathbb{R}^{N_1 \times \dots \times N_d}$ is in the TT format if it has the form
        \begin{equation}\label{eq:tt_definition}
            g_{i_1\dots i_d} = G^1_{i_1 l_1}G^2_{l_1 i_2 l_2}\dots G^d_{l_{d-1}i_d},
        \end{equation}
        where for each $n \in \overline{1,\ d}$, $G^n_{l_{n-1}i_n l_n} \in \mathbb{R}^{r_{n - 1} \times N_n \times r_n}$ is a $3$-dimensional tensor with $r_0 = r_{d} = 1$ and Einstein summation convention is assumed.
        The values $r_k$ are called \emph{TT-ranks}.
    \end{definition}
    If $N = \max_n{N_n}$ and TT-rank $r = \max_n{r_n}$, accessing a component requires $\mathcal{O}(dr^2)$ operations and the storage complexity is $\mathcal{O}(dNr^2)$ compared to $\mathcal{O}(1)$ and $\mathcal{O}(N^d)$ in the case of storing all the values in a full tensor.
    Compression is achieved if the rank is significantly lower than the full rank.
    
    We assume that for each proximal step an adequate initial guess for the fixed-point $\eta_{0,i_1\dots i_d}$ in the TT format can be obtained.
    In the sequel it is explained how to approximate all the solution steps while staying in the TT format.
    
    \subsection{Solution of the heat equation in TT format}
    The solution of the heat equation amounts to applying the operator $H_d(s)$.
    It is required during the fixed-point iteration (see~\eqref{eq:heat_oper}-\eqref{eq:heat_oper_end}) and later to compute the drift terms for the sampling dynamics (\eqref{eq:SDE_drift}, \eqref{eq:ODE_drift}).
    We first replace the Laplace operator $\Delta$ with its second-order in space finite difference approximation
    \begin{equation}
        \label{eq:discrete_Lap}
        L_h = D_{1, h} \otimes I_2 \otimes \cdots \otimes I_d + 
            I_1 \otimes D_{2, h} \otimes \cdots \otimes I_d + \dots + 
            I_1 \otimes I_2 \otimes \cdots \otimes D_{d, h},
    \end{equation}
    where $\otimes$ denotes the tensor product, $I_k \in \mathbb{R}^{N_k \times N_k}$ is an identity matrix and 
    \begin{equation*}
        D_{n, h} = \frac{1}{h_n^2}
            \begin{pmatrix}
                -1 & 1      &           &        &         \\
                1  & -2     & 1         &        &         \\
                   & \ddots & \ddots    & \ddots &         \\
                   &        & \ddots    & \ddots & 1       \\
                   &        &           & 1      & -1        
             \end{pmatrix}
    \end{equation*}
    for $n \in \overline{1,\ d}$ is a $N_n \times N_n$ matrix, which represents the second-order finite difference approximation of $\frac{\partial^2}{\partial x_n^2}$.
    Then, for a time-dependent tensor $g(t)$ the following ODE holds true
    \begin{equation*}
        \frac{\mathrm{d}}{\mathrm{dt}}g(t) = \beta L_h g(t).
    \end{equation*}
    Its solution has the form
    \begin{equation}\label{eq:heat_solution}
        g(t) = e^{\beta_k t L_h}g(0).
    \end{equation}

    Due to the structure of the discrete Laplacian~\eqref{eq:discrete_Lap}, the matrix exponential of \eqref{eq:heat_solution} has the form (see~\cite[{Section~{2.4}}]{graham2018kronecker})
    \begin{equation}\label{eq:discrete_expm}
        e^{\beta_k t L_h} = e^{\beta_k t D_{1, h}} \otimes \cdots \otimes e^{\beta_k t D_{d, h}}.
    \end{equation}
    Finally, if the initial tensor $g(0)$ is in TT format, the solution $g(t)$ is given by
    \begin{multline*}
        g(t)_{i_1\dots i_d} 
            = \left(e^{\beta_k t L_h} \right)_{i_1\dots i_d j_1 \dots j_d}g(0)_{j_1 \dots j_d} = \\
            = \left(e^{\beta_k t L_h} \right)_{i_1\dots i_d j_1 \dots j_d} G^1_{j_1 k_1}G^2_{k_1 j_2 k_2}\dots G^d_{k_{d-1}j_d} 
            = \left(e^{\beta_k t D_{1, h}}_{i_1 j_1} \dots e^{\beta_k t D_{d, h}}_{i_d j_d}\right) G^1_{j_1 l_1}G^2_{l_1 j_2 l_2}\dots G^d_{l_{d-1}j_d} = \\
            = \left(e^{\beta_k t D_{1, h}}_{i_1 j_1} G^1_{j_1 l_1}\right) \left(e^{\beta_k t D_{2, h}}_{i_2 j_2} G^2_{l_1 j_2 l_2} \right)\dots \left(e^{\beta_k t D_{d, h}}_{i_d j_d} G^d_{l_{d-1}j_d}\right).
    \end{multline*}
    This means that the approximate solution can also be represented in TT format with new cores 
    \begin{equation}\label{eq:heat_tt_cores}
        e^{\beta_k t D_{n, h}}_{i_n j_n} G^n_{l_{n-1} j_n l_n} \in \mathbb{R}^{r_{n - 1} \times N_n \times r_n},   
    \end{equation}
    evidently having the same TT rank as the initial condition.
    The update of the TT cores by~\eqref{eq:heat_tt_cores} requires $\mathcal{O}(dN^2 r^2)$ operations compared to $\mathcal{O}(N^{2d})$ operations required do compute a general convolution of type $A_{i_1\dots i_d j_1\dots j_d} b_{j_1\dots j_d}$.
    For given $(\beta_k, T_k)$, $d$ matrix exponentials (or, in case $N_n,\ h_n$ in every direction are the same, only one) can be precomputed, for example by means of a Pad\'e approximation~\cite{almohy2010anew}.
    Thus we argue that the heat equation required during the fixed-point iteration can be efficiently solved in TT format without explicit time stepping.

    \subsection{Solution of initial and terminal conditions}
    Tensors $\hat\eta_{m, 0},\ \tilde\eta_m$ are defined by~\eqref{eq:fp_initial} and~\eqref{eq:fp_terminal_KL} in the sense that their value can be computed for any given index.
    We aim to acquire a TT representation of these tensors by computing the values at a much smaller subset of indices.
    Note that it is crucial to control the amount of evaluations of the right-hand side of~\eqref{eq:fp_terminal_KL}.
    This is because in this step the posterior evaluations take place, making it a computational bottleneck of the whole algorithm. 
    Such a task to construct a tensor from relatively few measurements is called \emph{tensor completion} or \emph{reconstruction}.
    A common algorithm is the well-known TT-cross approximation~\cite{oseledets2010tt}, for which we provide a brief sketch.
    Given a multi-dimensional tensor $T\in \mathbb{R}^{n_1 \times \dots \times n_d}$, it may be of use to view it as a $2$-dimensional matrix, produced by grouping indices of several dimensions $(i_1, i_2, \dots i_k)$ into a joint index $i_1i_2\dots i_k \in \overline{1, n_1 \cdots n_k}$, by some one-to-one mapping.
    This matrix is then given by
    \[
        \left[T^{(k)}\right]_{i_1i_2\dots i_k, i_{k+1}\dots i_{d}} = T_{i_1,\dots i_d}
    \]
    and called the \emph{unfolding} of the tensor.
    If a tensor has a best TT approximation error $\epsilon$ with ranks $r_k$, the method reconstructs it by recursively building skeleton decompositions of the aforementioned unfoldings.
    For a given sequence of indices for the skeleton decomposition $\{(I^{\leq k}, I^{>k}) \}_{k = \overline{1, d - 1}}$, where each $I^{\leq k} = \{(i^j_1, i^j_2, \dots i^j_k) \}$, the error can be estimated as follows.
    \begin{theorem}[{\cite[Theorem 2]{qin2022error}}]
        For a given  tensor $\mathcal{T} \in \mathbb{R}^{N_1 \times \dots \times N_d}$, assume there is  a tensor $\mathcal{T}_{r_k}$ in TT format with ranks $r_k$ such that
        \begin{gather*}
            r = \max_{k = \overline{1, d-1}} r_k, \quad
            \epsilon = \max_{k = \overline{1, d-1}} \|T^{\langle k \rangle} - T_{r_k}^{\langle k \rangle}\|_F, \\
            \kappa = \max_{k = \overline{1, d-1}} \left\{ %
                        \|T^{\langle k \rangle}(:, I^{>k})\cdot  (T^{\langle k \rangle}(I^{\leq k}, I^{>k}))^{-1} \|,%
                        \|(T^{\langle k \rangle}(I^{\leq k}, I^{>k}))^{-1} \cdot T^{\langle k \rangle}(I^{\leq k}, :) \| \right\}.
        \end{gather*}
        Then, for sufficiently small $\varepsilon>0$ there exists a TT cross approximation $\mathcal{T}_{\text{TT-cross}}$ such that
        \begin{equation}
            \|\mathcal{T} - \mathcal{T}_{\text{TT-cross}}\|_F \leq \frac{(3\kappa)^{\lceil \log_2 N \rceil} - 1}{3\kappa - 1}(r + 1)\varepsilon.
        \end{equation}
    \end{theorem}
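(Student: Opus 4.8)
The plan is to obtain the bound from the one-step error estimate for a matrix skeleton (CUR) approximation and then to propagate it through the recursive structure of TT-cross by induction, finishing with a geometric series; throughout, $\varepsilon$ denotes the common unfolding error $\epsilon = \max_k \|T^{\langle k \rangle} - T_{r_k}^{\langle k \rangle}\|_F$ from the hypothesis. First I would isolate the following matrix lemma: if $A\in\mathbb{R}^{m\times n}$ admits a rank-$r$ approximant $A_r$ with $\|A-A_r\|_F\le\varepsilon$, and index sets $I,J$ are chosen so that, writing $C=A(:,J)$, $R=A(I,:)$ and $\hat A=A(I,J)$ with $\hat A$ invertible, one has $\max\{\|C\hat A^{-1}\|_2,\|\hat A^{-1}R\|_2\}\le\kappa$, then for $\varepsilon$ small enough
\[
    \|A-C\hat A^{-1}R\|_F \le (1+2\kappa)\varepsilon + O(\varepsilon^2).
\]
The argument is the usual split $A-C\hat A^{-1}R = (A-A_r) - (C\hat A^{-1}R - C_r\hat A_r^{-1}R_r)$, where $A_r=C_r\hat A_r^{-1}R_r$ is the exact skeleton of $A_r$ on the same index sets (legitimate because $A_r$ has rank $r$ and, for small $\varepsilon$, the submatrix $A_r(I,J)$ stays invertible), followed by a resolvent expansion of $\hat A^{-1}$ around $\hat A_r^{-1}$ and by estimating the remaining terms using the two defining inequalities for $\kappa$. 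Since $C\hat A^{-1}$ and $\hat A^{-1}R$ each contain an $r\times r$ identity block (indeed $C\hat A^{-1}$ reproduces $C$ on the columns $J$, and likewise for $\hat A^{-1}R$ on the rows $I$), one always has $\kappa\ge1$, hence $1+2\kappa\le3\kappa$; this is the origin of the per-level factor $3\kappa$ in the statement.

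Next I would run the induction over the TT-cross levels. The algorithm produces the cores of $\mathcal{T}_{\text{TT-cross}}$ by applying the one-step estimate to the successive unfoldings $T^{\langle k\rangle}$ restricted to the nested index sets $(I^{\le k},I^{>k})$. After each skeleton step one is left with a compressed surrogate whose unfolding error is the error inherited so far, amplified by $3\kappa$ through that step, plus a fresh contribution collecting the $r$ selected fibres of the skeleton together with the residual, which gives $(r+1)\varepsilon$. Writing $e_\ell$ for the accumulated Frobenius error after $\ell$ levels, this yields the recurrence $e_\ell\le 3\kappa\,e_{\ell-1}+(r+1)\varepsilon$ with $e_0=0$, the number of levels being $L=\lceil\log_2 N\rceil$ for the dyadic index-refinement underlying the TT-cross variant considered (a balanced organisation of the elementary skeleton steps). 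Unrolling the recurrence gives
\[
    \|\mathcal{T}-\mathcal{T}_{\text{TT-cross}}\|_F = e_L \le (r+1)\varepsilon\sum_{j=0}^{L-1}(3\kappa)^j = \frac{(3\kappa)^{\lceil\log_2 N\rceil}-1}{3\kappa-1}\,(r+1)\varepsilon,
\]
which is the claim.

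The delicate part — and the main obstacle — is the induction step: one must show that the surrogate tensor handed to the next level still possesses a rank-$r$ approximation with error comparable to $\varepsilon$ \emph{and} that the stability constants of its unfoldings remain bounded by the same $\kappa$. This is a perturbation argument on the intersection submatrices $T^{\langle k\rangle}(I^{\le k},I^{>k})$: $\varepsilon$ must be small relative to their least singular values so that the pseudoinverses appearing in the skeletons stay controlled and the $O(\varepsilon^2)$ remainders of the one-step estimate are absorbed — which is precisely what forces the hypothesis ``$\varepsilon$ sufficiently small''. It is also the point at which the constants must be tracked carefully so that the recursion collapses to exactly $3\kappa$ and $(r+1)$ rather than to a power $\kappa^2$ or a worse polynomial in $r$.
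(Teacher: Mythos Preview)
The paper does not prove this theorem at all: it is stated as a citation of \cite[Theorem~2]{qin2022error} and invoked only to justify the error behaviour of TT-cross, with no proof or sketch given in the text. There is therefore nothing in the paper to compare your argument against.

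That said, a brief remark on your sketch itself. The one-step skeleton estimate $\|A-C\hat A^{-1}R\|_F\le(1+2\kappa)\varepsilon+O(\varepsilon^2)$ and the observation $\kappa\ge1\Rightarrow 1+2\kappa\le3\kappa$ are the standard ingredients, and the geometric-series unrolling is correct once the recurrence $e_\ell\le3\kappa\,e_{\ell-1}+(r+1)\varepsilon$ is established. The place where your outline is thin is exactly the one you flag: justifying that the surrogate passed to the next level keeps both (i) a rank-$r$ approximant within the \emph{same} $\varepsilon$ and (ii) the \emph{same} $\kappa$ bound on its unfoldings. In the cited reference this is handled by working with the fixed nested index sets $(I^{\le k},I^{>k})$ throughout and bounding the intermediate objects directly in terms of the original unfoldings $T^{\langle k\rangle}$, rather than by a separate perturbation argument at each level; that is how one avoids an uncontrolled growth of the constants. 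Your identification of $L=\lceil\log_2 N\rceil$ as coming from a dyadic organisation of the skeleton steps is also somewhat loose---in the source the exponent arises from the depth of the recursive interpolation, and making this precise is part of the work. None of this is wrong in spirit, but a complete proof would need those points filled in; as presented, the induction step is asserted rather than established.
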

    One pass of computing the TT cores for each dimension by building skeleton decompositions is referred to as a \emph{sweep}.
    It requires $\mathcal{O}(dNr^2)$ evaluations of the right-hand side tensor elements.
    Practically, the optimal indices are not known, thus, several sweeps are performed, using the approximation from the previous ones and various heuristics to improve the selection of the indices. 
    The description of these approachse can be found in~\cite{dolgov2020parallel} and references therein.
    For an arbitrary tensor, the low-rank approximation is never guaranteed, but the maximum rank can be estimated according to the knowledge of the problem and numerical resources available.
    The rank of the cross-approximation can be dynamically adapted with the DMRG technique~\cite{savostyanov2011fast}.

    A hallmark of the Eulerian approach is that by using a fixed grid, the evaluations of the posterior are carried out in a comprehensive and ``organized'' way.
    More specifically, since in our problem the posterior can be evaluated only at a finite (although extremely large even for moderated dimensions $d > 4$) number of points determined in advance by the grid, we can implement a caching strategy in practice to reduce the number of evaluations drastically.
    In the current implementation, we simply store the first $N_{\text{cache}}$ calls to the posterior.
    The cache is shared between inner fixed-point iterations and outer proximal steps.
    In the numerical experiments below we demonstrate that this feature indeed significantly decreases the amount of the posterior calls required to solve the problem.
    
    \subsection{Fixed-point iteration}
    As explained in \Cref{ss:fp_equations}, the PDE coupling problem of one proximal step is recast as a fixed-point problem
    \[
        \eta^* = G(\eta^*) 
    \]
    with TT representation $\eta$ of the potential $\eta(T_k, x)$ and operator $G$ is the approximation of the cycle defined in \eqref{eq:fp_general}-\eqref{eq:fp_general_end} and depicted in Figure~\ref{fig:fp_cycle}.
    The numerical approximations are described in the preceding sections.
    For an iterative solution procedure, consider a sequence
    \[
        x_m = \eta_m,\quad g_m =G(x_m),\quad r_m = g_m - x_m.
    \]
    The simplest approach is the relaxation method 
    \[
        x_{m+1} = q_m g_m + (1 - q_m) x_m,
    \]
    where $q_m \in (0;, 1]$ is called \emph{relaxation factor}.
    It can either be constant or selected adaptively.
    If constant, setting smaller values of $q_m$ improves the stability of the method but deteriorates its speed.
    For $q_m \equiv 1$, one recovers the Picard method.
    Aitken's scheme~\cite{birken2015fast} updates $q_m$ adaptively depending on two previous residuals.
    However, it is evident that in the region where the operator $G$ is a contraction, this scheme cannot converge faster than the Picard method with $q = 1$.
    In our preliminary experiments, the fixed-point problems appear to always converge for the Picard method with $q = 1$ and hence the application of Aitken's scheme is not reported.

    A more involved acceleration method called \emph{Anderson Acceleration} (AA) relies on the minimization in the affine hull of $P$ previous residuals.
    It is described by
    \begin{align}\label{eq:anderson_fp}
        (\alpha_0,\ \dots,\ \alpha_{P-1}) &= \arg \min_{\alpha_0 + \dots + \alpha_{P-1} = 1} \left\|\sum\limits_{i=0}^{P-1} \alpha_i r_{m-i}\right\|_2^2, \\
        x_{m+1} &= q \sum\limits_{i=0}^{P-1} \alpha_i g_{m-i} + (1-q) \sum\limits_{i=0}^{P-1} \alpha_i x_{m-i}. \notag
    \end{align}
    The theoretical results on the convergence of accelerated FP in general can be found in~\cite{park2022exact} and for AA in particular in~\cite{toth2015convergence}. 
    AA can be related to multisecant methods~\cite{fang2009two}.
    In fact, for a linear problem it is equivalent to GMRES~\cite{walker2011anderson}. 
    Practically, AA has been reported to improve both speed and robustness of fixed-point iteration compared to the Picard iteration (see~\cite{aksenov2021application,tang2022accelerating} and references therein).
    For the application with TT representations of the iterates, each fixed-point update is followed by a TT truncation up to prescribed rank $r_{\text{max}}$ and tolerance $\varepsilon$.

%
%

    \subsection{Sampling the approximate solution}\label{ss:tt_sampling}
    The PDE-based approach of our method combines the Lagrangian and Eulerean viewpoint since the approximate solution of the PDE system defines a dynamics that describes the evolution of particles, moving from a start towards an updated distribution.
    The representation \eqref{eq:ode_characteristic_method} provides a deterministic description of the dynamics, resulting in a deterministic sampling method. 
    It requires solving an ODE with right-hand side defined by~\eqref{eq:ODE_drift}, i.e., 
    \begin{gather}\label{eq:ode_interpolation_dynamic}
        \dot{X}(t) = \beta_k \nabla_x\big(\log \eta(t, X(t)) - \log\hat{\eta}(t, X(t))\big),\qquad
        X(0) \sim \rho_0. \notag
    \end{gather}
    Similar methods have recently become popular, for instance in the generative modelling community~\cite{song2020score}.
    The approach only requires random samples of the initial density, which can be made tractable by construction.
    Note that the ODE can be solved in parallel with high precision and with an adaptive choice of time steps for each trajectory. 
    Alternatively, from the Fokker-Planck equation~\eqref{eq:RegWass_proximal_rho} and the fact that $\Phi = 2\beta_k \log\eta$, an SDE can be defined by 
    \begin{gather}\label{eq:sde_interpolation_dynamic}
        \mathrm{d}X_t = 2\beta_k\nabla{\log\eta(X_t, t)}\mathrm{d}t + \sqrt{2\beta_k} \mathrm{d}W_t,\qquad
        X_0 \sim \rho_0, \notag
    \end{gather}
    with standard Brownian motion $W_t$ and the law of $X_t$ denoted by $\rho_t$. 
    Any suitable numerical method of the SDE solution as for example Euler-Maruyama provides an algorithm for approximate sampling based on this dynamics.

    \begin{remark}
        Our preliminary computations show that it might be beneficial to employ a combination of both dynamics.
        The ODE dynamic can be approximately integrated with high-order adaptive methods such as $\operatorname{RK45}$.
        In our sampling tasks, the steps taken are quite large in the beginning of the time interval.
        However, towards the terminal time for a small subset of the trajectories the step size estimated by the adaptive solver becomes extremely small.
        Additionally, due to the discretization error in the regions of low probability and approximate computation of the gradient, the latter can become too small and particles might get stuck in such regions.
        This leads to samples produced solely with the ODE scheme to exhibit certain unwanted artifacts.
        Heuristically, some scheme with addition of noise would push the particles away from the regions with zero gradient.
        Thus, we suggest to combine the deterministic and the stochastic dynamics.
        More precisely, for some predefined value $\epsilon_{\text{SDE}}$ we solve the ODE system \eqref{eq:ode_interpolation_dynamic} on the time interval $[0; (1 - \epsilon_{\text{SDE}})T_k]$ (with outer step size $T_k$ in the proximal scheme).
        Then, the final $\epsilon_{\text{SDE}}T_k$ time is integrated with the Euler-Maruyama method with a predefined number of steps.
        The SDE time fraction $\epsilon_{\text{SDE}}$ is chosen heuristically and takes values $0.01\sim0.001$
    
        \begin{figure}
            \centering
            \includegraphics[width=.8\textwidth]{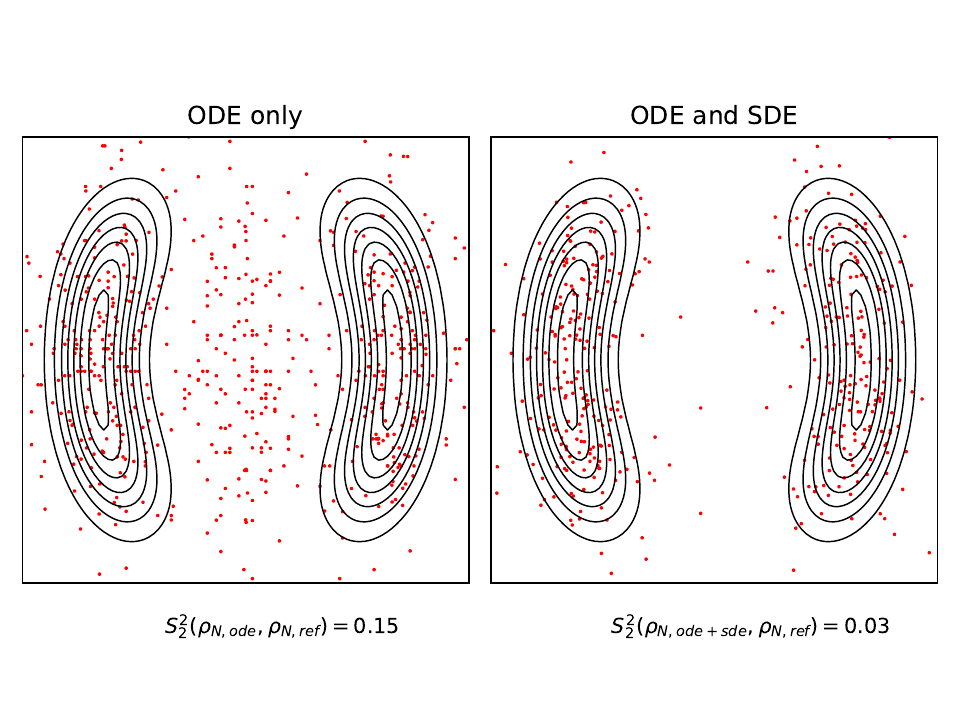}
            \caption{Contour lines of the test distribution and the samples generated with the deterministic method (left) and the combination of deterministic and stochastic method (right).}
            \label{fig:sampling_artefact}
        \end{figure}
    
        Figure~\ref{fig:sampling_artefact} illustrates the effect discussed above.
        The plot on the left depicts the sample produced with the ODE dynamics only.
        It clearly exhibits a ``line-like'' artefact in the center, where the actual probability density of the target is rather low.
        The sample generated with the combination of ODE and SDE dynamics (right) fits the reference distribution better (for this particular example, the Sinkhorn distance is roughly $5$ times lower).
        In fact, it also requires less computational time because the region where the ODE solver requires small time steps is avoided.
    
        Post-processing procedures for deterministic sampling methods that involve the addition of noise are quite common.
        For example,~\cite{song2020score} suggests using a predictor-corrector scheme, where the solution of the probability flow ODE is corrected at each step by a step of Langevin dynamics towards the current distribution.
        A Langevin post-processing with multiple steps is also reported in~\cite{sommer2024generative}.
        The joint dynamics presented in the current work provides a novel viewpoint on this established computational know-how. 
    \end{remark}

    \section{Numerical experiments}
    \label{sec:experiments}
    This section is concerned with the numerical assessment of the method proposed above. 
    First, the properties of the method are explored with a trivial example of fitting a Gaussian target with diagonal covariance matrix.
    The dependence of the convergence properties of the fixed-point iteration on the stepsize and regularization parameter is studied. 
    We demonstrate that in principle a single proximal step can be sufficient to provide a good approximation. 
    This comes at the cost of increasing complexity of the fixed-point iteration, which in turn can be mitigated by implementing accelerated fixed-point schemes.

    Second, the method is tested against the baseline Metropolis-Hastings MCMC method on several synthetic problems. 
    We highlight the performance of the method in case of multimodal distributions and distributions with nonconvex potentials, which pose well-known challenges for the baseline method.
    We carefully estimate the <<computational budget>> of our method, expressed as the number of the calls to the posterior, and demonstrate that our posterior approximation is superior to the baseline given the same amount of posterior calls.
    
    For the Python implementation, the \texttt{teneva} package~\cite{chertkov2023black} is employed, which contains all basic tensor-train operations as well as a cross-approximation solver. 
    
    \subsection{Approach to performance assessment}
    Before providing the numerical applications of the described method, we briefly discuss the approaches used to comprehensively assess its practical performance.
    The method itself allows to track the $\operatorname{KL}$ divergence to the posterior by carrying out an additional cross-approximation of the quantity
    \begin{equation}\label{eq:tt_kl_wpost}
        \log\left( \frac{\rho_{\text{TT}}}{\rho_\infty} \right) \rho_{\text{TT}}
    \end{equation}
    on a grid and a successive approximation of the integral by contracting the low-rank TT approximation.
    Note that $\rho_{\text{TT}} = \eta_{M} \cdot \hat\eta_{M}$ and for the converged solution, from \eqref{eq:fp_terminal_KL}
    \begin{equation*}
        \log\left( \frac{\rho_{\text{TT}}}{\rho_\infty} \right) = -2\beta_k \log \eta_{M}.
    \end{equation*}
    We hence suggest that the divergence can actually be evaluated without additional posterior calls. 
    The $\operatorname{KL}$ divergence estimated in this fashion can be utilized for the assessment of the convergence of the method.
    
    Considering the targeted practical application in Bayesian inversion and related applications, estimating the expectations of various quantities of interest is of importance. 
    Certain functions such as $x_i$ and $x_i x_j$ do not depend on the solution of the forward model and have a straightforward low-rank representation. 
    Thus, their moments (e.g. means and covariances of the parameters) can be estimated with negligible computational costs with {TT} contractions.  
    For more general quantities, an approximation similar to the one discussed above in the context of estimating the $\operatorname{KL}$ divergence is possible.
    However, this possible approach is left for the future studies. 
    Instead, we focus on using the acquired model to sample from the approximate posterior in order to relate and compare the method to <<Eulerian>> methods that appear to be more established in the area.
    
    We aim to compare the produced samples either to the samples from the posterior (for tractable ones such as Gaussian mixtures) or to a baseline sampling method (such as Langevin dynamics or MCMC).
    Several issues are to be dealt with in order to do so.
    First, the empirical measure associated with a sample,
    \[
        \rho_N = \sum\limits_{i=1}^N \delta_{x_i},\ x_i \sim \rho
    \]
    is obviously different from the original measure $\rho$.
    Thus, there is some nonzero Wasserstein distance between them.
    It can be observed that two different samples from the same distribution can have quite a large Wasserstein distance, especially for higher dimensions of the problem.
    This suggests that when the approximate sample is sufficiently close to the target, the error is dominated by the finite-sample effect and not with the approximation error. 
    This effect can be somewhat mitigated by using a larger sample size.
    Second, the computational complexity of the original linear OT problem~\eqref{eq:wasserstein_distance} does not scale well with the number of samples, requiring alternatives to be considered.
    One way to tackle these issues is to solve the \emph{entropic OT} distance given by
    \[
        W^2_{2, \varepsilon}(\rho_1,\ \rho_2) = \min\limits_{\pi \in \Pi(\rho_1, \rho_2)} \int \|x - y\|^2 
                                                    + \varepsilon \operatorname{KL}\left(\pi | \rho_1 \otimes \rho_2 \right),
    \]
    which can be done efficiently.
    The entropic regularizer adds a <<bias>>, i.e., $W^2_{2,\varepsilon}(\rho, \rho) \neq 0$ for general $\rho$.
    Introducing <<debiasing>> terms, one gets a so-called \emph{Sinkhorn divergence} by
    \[
        S^2_{2,\varepsilon}(\rho_1, \rho_2) = W^2_{2, \varepsilon}(\rho_1,\ \rho_2) 
            - \frac{1}{2}\left( W^2_{2, \varepsilon}(\rho_1,\ \rho_1) + W^2_{2, \varepsilon}(\rho_2,\ \rho_2) \right).
    \]
    This divergence is an approximation of $W^2_2$ up to $O(\varepsilon^2)$~\cite{chizat2020faster} and can be efficiently computed with the Sinkhorn algorithm~\cite{cuturi2013sinkhorn} or by means of stochastic optimization~\cite{genevay2016stochastic} in higher dimensions. 
    In our numerical experiments, the implementation of the Sinkhorn algorithm from the \texttt{geomloss} package~\cite{feydy2019interpolating} is used.

    An alternative approach to measure the OT distance betwen samples would be to use the \emph{sliced Wasserstein} distance given by
    \begin{gather*}
        \overline{W}^2_2(\rho_1, \rho_2) = \sup\limits_{\theta \in \mathbb{S}^{d-1}} W^2_2(\theta_\sharp \rho_1, \theta_\sharp \rho_2), \\
                                                \theta_\sharp = (\langle \theta, \cdot \rangle)_\sharp,
    \end{gather*}
    where $\theta \in \mathbb{S}^{d-1}$ is a direction and $\theta_\sharp$ denotes a pushforward of the measure with the map that projects vectors to the direction $\theta$.
    It is shown in~\cite{bayraktar2021strong} that while $\overline{W}^2_2$ induces a metric equivalent to $W^2_2$, the computation of it relies on solving unidimensional OT problems for which a closed-form solution exists. 
    The implementation of the sliced Wasserstein distance from \texttt{POT} package~\cite{flamary2021pot} was employed, where $\sup$ is estimated by taking a finite number of random projections. 
    Except for the numerical efficiency, the sliced OT metric does not show any qualitative benefits compared to the Sinkhorn distance in our test.
    Hence, in the sequel the results are only reported in the latter distance.

    We aim to show that our method produces samples that are consistent with the target measure by comparing them to the reference samples from the latter.
    Due to the finite-sample effect discussed in the beginning of this section, optimal transport distances between the samples even from the same distribution are in fact random variables not identical to zero.
    Following~\cite{eigel2022less}, we suggest comparing the distributions of this random variables.
    More specifically, we consider the distribution of $S^2_{2,\varepsilon}(\rho^1_{N_1},\ \rho^2_{N_2})$, where $\rho^i_N = \frac{1}{N}\sum_{k=1}^N \delta_{x_k}$ and $x_k \sim \rho^1 \text{i.i.d.}$, i.e. the empirical measure defined by an {i.i.d.} sample of size $N$ from the measure $\rho^1$. 
    If the measures $\rho^1$ and $\rho^2$ are actually equal, the distributions of $S^2_{2,\varepsilon}(\rho^1_{N_1},\ \rho^2_{N_2})$ and $S^2_{2,\varepsilon}(\rho^1_{N_1},\ \rho^1_{N_2})$ should coincide.
    Based on this, we acknowledge that our method generates samples from the target measure if the distribution of the Sinkhorn distances to the reference samples is close as a distribution to the distribution of Sinkhorn distances between independent reference samples.

    The rest of the section is organized as follows.
    The first part deals with verification computations for one step of the method with a Gaussian distribution with diagonal covariance matrix as posterior.
    The goal is to analyze the behavior of the method for different combinations of values of the parameters $T,\ \beta$ and the acceleration of the fixed-point scheme.
    In the second part, the method is compared to a baseline Metropolis-Hastings MCMC method for the task of sampling.
    Test distributions include multimodal distributions and a distribution with nonconvex potential.

    \subsection{Verification}
    We perform verification computations for a simple setting where both $\rho_0$ and $\rho_\infty$ are Gaussian distributions.
    For this, assume a uniform grid on the hypercube $[-L, L]^d,\ L = 3$ with $N = 30$  nodes in each dimension.
    Moreover, $\rho_0 = \mathcal{N}(0, \operatorname{I}_d)$ and $\rho_\infty = \mathcal{N}(m, \sigma\operatorname{I}_d)$ with $m$ generated randomly with $m_i \sim U\left([-L/2,\ L/2]\right)$ and $\sigma = 0.5$. 
    The dimension of the problem is $d = 16$.
    We track the $\operatorname{KL}$ divergence as discussed above. 

    A single step of the method is performed for a broad range of parameters $T$ and $\beta$.
    The results are presented in \tabref{tab:one_step_normal}
    \begin{table}[ht]
        \centering
        \begin{tabular}{lrrrrr|rrrrr}
\toprule
 & \multicolumn{5}{c}{$\operatorname{KL}(\rho_{\text{TT}}|\rho_\infty)$} & \multicolumn{5}{c}{$N_{fp}$} \\
$\beta$ & $1$ & $0.1$ & $0.01$ & $0.001$ & $0.0001$ & $1$ & $0.1$ & $0.01$ & $0.001$ & $0.0001$ \\
$T$ &  &  &  &  &  &  &  &  &  &  \\
\midrule
$0.1$ & $9.1$ & - & - & - & - & $13$ & - & - & - & - \\
$1$ & $5.7$ & - & - & - & - & $13$ & - & - & - & - \\
$10$ & $6.7$ & $0.13$ & - & - & - & $13$ & $58$ & - & - & - \\
$100$ & $6.8$ & $0.14$ & $0.0014$ & - & - & $13$ & $42$ & $4 \cdot 10^{2}$ & - & - \\
$1000$ & $6.7$ & $0.14$ & $0.0016$ & $1.7 \cdot 10^{-5}$ & - & $13$ & $42$ & $3.7 \cdot 10^{2}$ & $1 \cdot 10^{3}$ & - \\
$10000$ & $6.6$ & $0.14$ & $0.0016$ & $1.8 \cdot 10^{-5}$ & $2.6 \cdot 10^{-6}$ & $13$ & $42$ & $3.7 \cdot 10^{2}$ & $1 \cdot 10^{3}$ & $1 \cdot 10^{3}$ \\
$100000$ & $6.7$ & $0.14$ & $0.0016$ & $1.8 \cdot 10^{-5}$ & $2.6 \cdot 10^{-6}$ & $13$ & $42$ & $3.7 \cdot 10^{2}$ & $1 \cdot 10^{3}$ & $1 \cdot 10^{3}$ \\
\bottomrule
\end{tabular}

        \smallskip
        \caption{Approximated $\operatorname{KL}$ and fixed-point complexity with Picard iteration for one step of the method.}
        \label{tab:one_step_normal}
    \end{table}
    One can see that with small values of $\beta T$, which plays the role of the effective time horizon in the heat equations, the method does not converge.
    When this value is sufficiently large, a rather good reconstruction can be achieved in one step for small $\beta$. 
    The minimal value of $\operatorname{KL}(\rho_\text{TT} | \rho_\infty) \sim 10^{-6}$ is obtained for $\beta = 10^{-4},\ T=10^5$.
    The fixed point iterations are performed until the relative change of the discretized variables $\frac{\|\eta_m - \tilde\eta_m \|_F}{\|\eta_m \|_F}$ is less than $10^{-5}$.
    The convergence of this quantity is linear with the rate decreasing when $\beta$ decreases.
    The amount of fixed-point iterations can quickly become large, motivating the introduction of acceleration methods.

    In our case, for acceleration the Anderson method based on $P = 2$ iterates is chosen because it admits a closed form solution to the minimization subproblem. 
    \begin{figure}[ht]
        \begin{subfigure}[t]{0.45\textwidth}
            \centering
            \includegraphics[width=\textwidth, trim=0 20 0 0, clip]{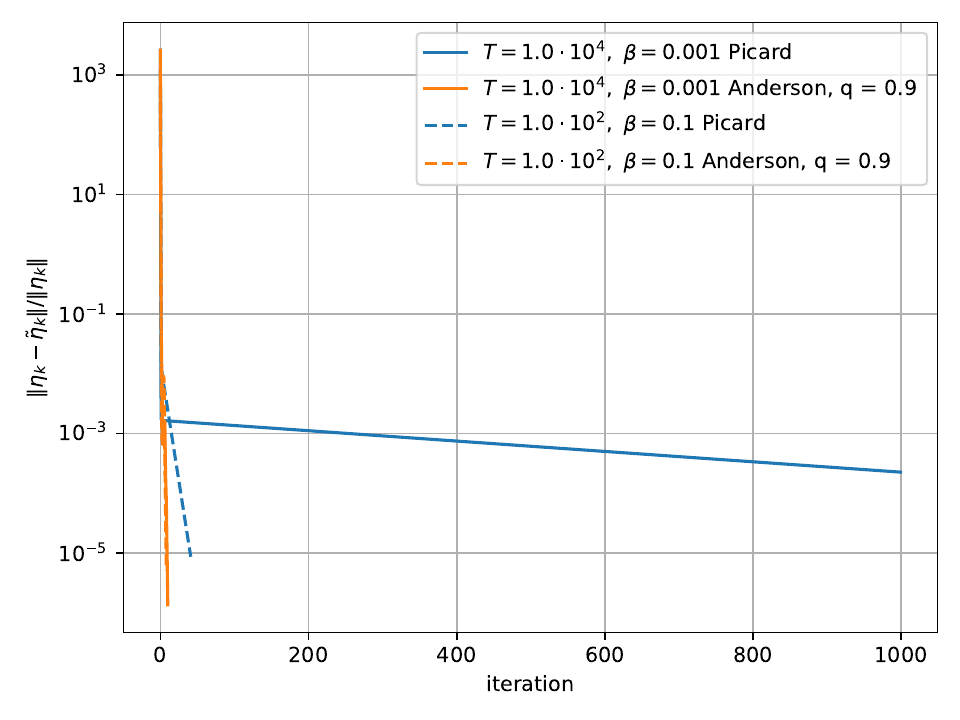}
            \caption{Relative error during fixed-point iterations for one step of the method.}
            \label{fig:fp_convergence_verification}
        \end{subfigure}
        \hfill
        \begin{subfigure}[t]{0.53\textwidth}
            \centering
            \includegraphics[width=\textwidth, trim=0 -40 0 0]{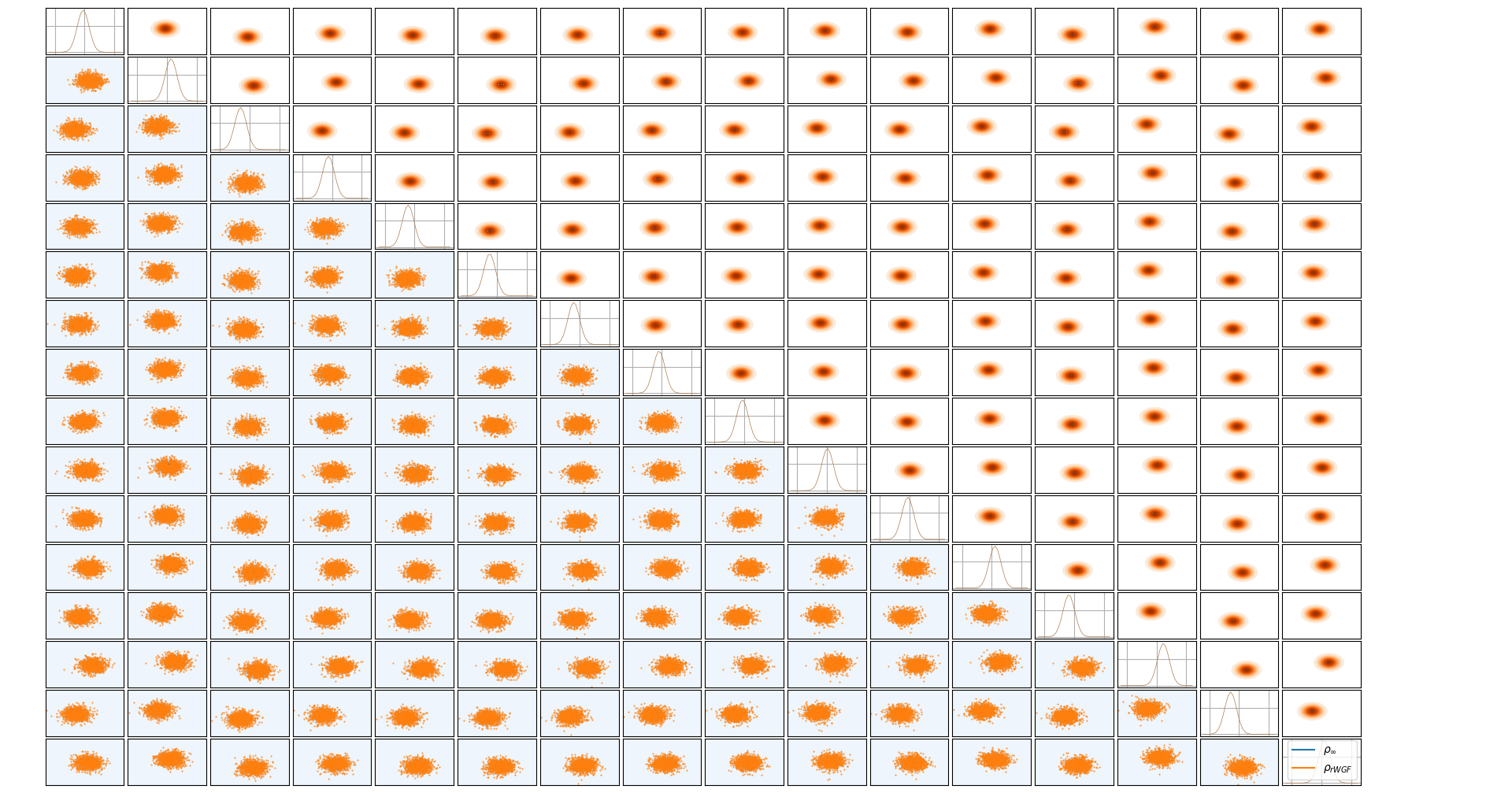}
            \caption{TT approximation after one step with $T~=~10^{5},\ \beta~=~10^{-4}$.}
            \label{fig:normal_marginals_verification}
        \end{subfigure}
        \caption{Reconstruction for a Gaussian target distribution.}
    \end{figure}

    The example of the fixed-point convergence plot is presented in Figure \ref{fig:fp_convergence_verification}.
    For illustrative purposes, we present the convergence results for two problems with the same value of $\beta T = 10$ and higher and lower regularization, $\beta = 10^{-1}$ (dashed lines) and $\beta = 10^{-3}$ (solid lines) correspondingly in the same plot.
    For the Picard iteration, one can clearly see that the linear convergence rate (slope of the line on the plot) is larger for larger regularization.
    Accelerated fixed-point iteration (orange) provides superlinear convergence and requires at maximum tens of iterations, giving a speed-up of up to $3$ orders of magnitude while potentially reaching higher fixed-point tolerance. 
    
    A downside of the Anderson acceleration is the presence of the relaxation parameter $q$.
    From our numerical experiments, it cannot always be set to $1$ in this problem since the computation may diverge.
    It is known that small $q$ reduce convergence speed and to the authors' knowledge, there is no way to select it a priori or adjust it adaptively.
    Despite of this, in the further experiments Anderson acceleration with $P = 2$ and $q$ around $0.8-0.9$ is used.

    \Cref{fig:normal_marginals_verification} depicts the approximate distributions. 
    Plots on the diagonal represent the marginal distributions of each variable $x_i$.
    The upper right triangle shows the contour lines of joint marginal distributions of variables $(x_i, x_j)$ of the approximate and the true posterior.
    In the bottom left triangle, the contour lines belong to the true posterior and the scatter plot represents the sample from the approximate posterior acquired with the deterministic ODE method as described in \Cref{ss:tt_sampling}.
    
    \subsection{Sampling from test distributions}
    The main focus of this part is to study the sampling capabilities of our method and compare the samples to baseline sampling methods using metrics based on optimal transport.
    The choice of the test distributions is partially inspired by~\cite{han2024tensor}.
    In addition to the Gaussian mixture family, we consider distributions described by their potential $V:\rho \propto e^{-V}$.
    Multimodal distributions and distributions with nonconvex $V$ can be particularly challenging for classical sampling methods such as MCMC or Langevin dynamics.
    
    The chosen test posteriors are given by
    \paragraph{Gaussian mixture}
    \[
        \rho_{\text{GM}} \propto \sum\limits_{k=1}^K w_k\rho_{m_k, C_k},
    \]
    where $\rho_{m_k, C_k}$ is a normalized density of a $d$-dimensional Gaussian distribution with mean $m_k$ and covariance $C_k$, $w_k$ are positive weights and $K$ is the number of components.
    In our test, we set $d = 30,\ K = 5, C_k = \sigma^2 I_d,\ \sigma^2 = 0.5$, weights are identical and $m_{i,j}$ is generated randomly from $U([-L/2, L/2])$.

    The next two distributions are also inspired by~\cite{han2024tensor}.
    Due to multimodality (in the former case) and the non-convexity of the potential (in the latter), they are difficult to sample with MCMC methods.
    \paragraph{Double-moon potential distribution}
    \[
        \rho_{\text{DM}}(x) \propto \exp\left( -2(\|x\|_2 - a)^2\right) \left( \exp(-2(x_1 -a)^2) + \exp(-2(x_1 +a)^2) \right)
    \]
    with scalar parameter $a = 2$
    \paragraph{Nonconvex potential distribution}
    \[
        V_{\text{NC}}(x) = \left( \sum\limits_{i = 1}^d \sqrt{|x_i - a_i|} \right)^2 
    \]
    with means vector $a = (a_i, \dots, a_d)$ taken as $a_i = (-1)^i$.

    The experiments are organized as follows. 
    The Tensor-Train approximation is computed with one regularized JKO step.
    During the fixed-point iterations, the calls to the posterior are cached and the number of actual calls to the posterior and those taken from the cache is kept track of.
    The sample of size $n_s = 400$ is then generated with the composition of ODE and SDE dynamics as described in~\Cref{ss:tt_sampling}.
        
    For comparison, reference samples are generated.
    In case of a Gaussian mixture, the target is sampled exactly.
    For the two other distributions, the reference is generated with Metropolis-Hastings MCMC (\texttt{emcee} package\cite{foreman2013emcee} is chosen as a particular implementation).
    The autocorrelation time $\tau_{ac}$ of the parameters can be estimated within the package, following the approach in~\cite{goodman2010ensemble}.
    We run $n_s = 400$ independent chains with starting value distributed as $\mathcal{N}(0, I_d)$ for a sufficiently large number of iterations ($\gtrsim 10000$) so that $N_{\text{iter}} > 50 \tau_{ac}$.
    The covariance parameter in the Gaussian step is chosen so that the acceptance rate is close to $25\%$.

    Finally, to have some perspective on the computational efficiency of the method, we compare it to the baseline method.
    In order to do this, MCMC is run again with the same parameters but with a shorter chain and the number of iteration chosen so that the number of the posterior calls is equal to the number of \emph{unique} calls carried out with the TT method.
                
    The procedure is repeated $20$ times for the TT method and the short MCMC chain.
    We also generate $20$ reference samples (in case of the long MCMC chain, $20$ states separated by $t_{ac}$ iterations are taken from the end of the chain). 
    We then compute $S^2_2(\rho^{\text{TT}}_{n_s}, \rho^{\text{ref}}_{n_s}),\ S^2_2(\rho^{\text{MCMC}}_{n_s}, \rho^{\text{ref}}_{n_s}),\ S^2_2(\rho^{\text{ref}}_{n_s}, \rho^{\text{ref}}_{n_s}),\ $ for each pair of different samples.
        
    \begin{table}[ht]
        \centering
        \begin{tabular}{lccccccccc}
\toprule
Distribution & $d$ & $r_{max}$ & \multicolumn{2}{c}{$N_\infty$} & \multicolumn{3}{c}{$S^2_{\varepsilon}$} & \multicolumn{2}{c}{Double OT} \\
 &  &  & Unique & Total & Ref. to ref. & Ref. to TT & Ref. to MCMC & TT & MCMC \\
\midrule
Mixture & $30$ & 5 & $1290K$ & $2561K$ & $9.04 \pm 0.10$ & $9.05 \pm 0.14$ & $9.77 \pm 0.33$ & $2.1 \cdot 10^{-3}$ & $5.8 \cdot 10^{-1}$ \\
Double-Moon & $6$ & 3 & $111K$ & $164K$ & $0.65 \pm 0.02$ & $0.67 \pm 0.03$ & $0.66 \pm 0.03$ & $3.4 \cdot 10^{-4}$ & $1.2 \cdot 10^{-4}$ \\
Nonconvex & $6$ & 2 & $25K$ & $183K$ & $0.06 \pm 0.01$ & $0.08 \pm 0.01$ & $1.75 \pm 0.11$ & $3.9 \cdot 10^{-4}$ & $2.9 \cdot 10^{0}$ \\
\bottomrule
\end{tabular}

        \smallskip
        \caption{Summary of results for each distribution, showing problem dimension, maximal TT rank, posterior call counts, average Sinkhorn distance with standard deviation for each sampling method, and the double OT estimate between distributions.}
        \label{tab:sec5_result}
    \end{table}
    The results are presented in  \tabref{tab:sec5_result}. 
    For each distribution, the dimension of the problem and the maximal TT rank of the solution $r_{max}$ are presented.
    $N_\infty$ denotes the number of the posterior calls carried out by the TT method, including the unique calls (when the posterior is actually called) and the total number (i.e. unique and re-used from cache).
    The Sinkhorn distance $S^2_\varepsilon$ averaged with respect to every different pair of samples generated by the TT method, by the short MCMC chain and between the reference itself, is presented alongside its standard deviation.
    Finally, the OT distance between these distributions is denoted by Double OT.
    The results are illustrated in Figure~\ref{fig:sec5_plot}.
    The contour lines of a two-dimensional marginal of the test distribution are shown alongside with the reference sample, a sample generated with the TT method and the  MCMC sample with a shorter chain generated with the same amount of posterior calls as for the TT method.

    Interestingly, the test runs show that approximation of multimodal and concentrated distributions is in fact possible with quite low TT ranks.
    One can see that the cache plays a significant role in the computation, increasing the effective amount of the posterior calls (compared to the scheme with no cache) by a factor ranging from $\sim 1.5$ times in the case of the Double-Moon problem up to $\sim 7.3$ times in the case of the Nonconvex problem.
    In general, the method performs at least as good as the baseline with the same computational effort.
    We note that the Nonconvex problem was particularly difficulty for the MCMC method, where the difference between the short chain MCMC sample and the reference can be seen immediately in Figure~\ref{fig:sec5_plot}. 
    \begin{figure}[ht]
        \centering
        \includegraphics[width=.85\textwidth]{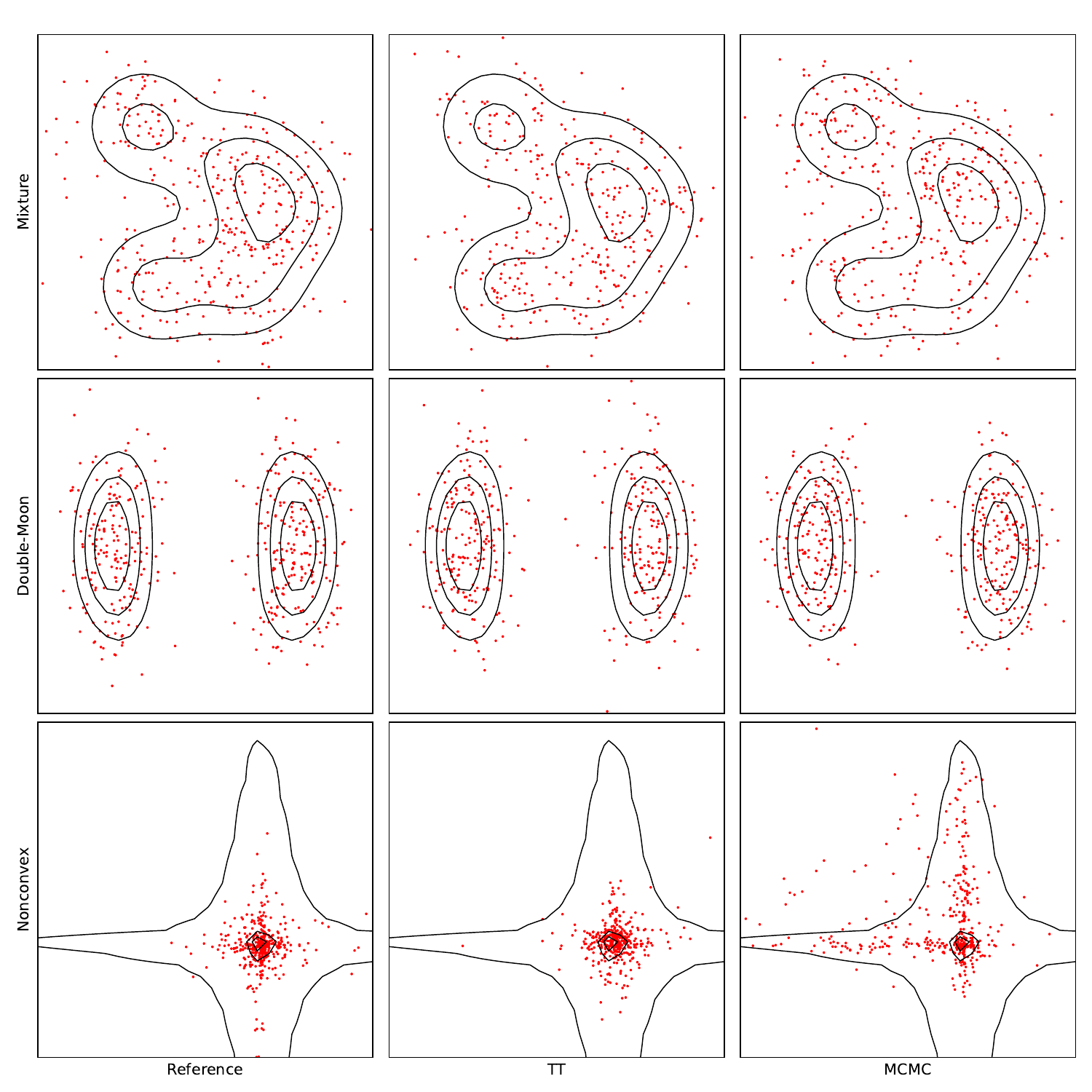}
        \caption{Contour lines of two-dimensional marginals of the test distributions shown with the reference sample, the TT-generated sample, and an MCMC sample using a shorter chain with an equivalent number of posterior calls as the TT method.}
        \label{fig:sec5_plot}
    \end{figure}

    \section{Application to Bayesian inverse problems}
    The present section consists  of case studies for two certain PDE-constrained Bayesian inverse problem. 
    We consider the application of our method for Bayesian inversion.
    We assess our method in the context of estimating the unknown model parameters while providing an uncertainty quantification.
    We also demonstrate how the fitted TT model for the Bayesian posterior can be re-used to efficiently solve an associated problem.
    In our case, importance sampling for some quantity of interest non-trivially depending on the unknown parameters is considered.
    
    Specifically, we aim to reconstruct the initial condition in an initial value problems for PDEs.
    Our two test cases are described in the following.
    \paragraph{Hyperbolic equation}
        The initial value problem for the wave equation is considered.
        It is given by
        \begin{gather}\label{eq:ip_hyperbolic_pde}
            \frac{\partial^2 u}{\partial x^2} - \frac{\partial^2 u}{\partial t^2} = 0, \\
            u(0, x) = h_\theta(x),\ \frac{\partial u}{\partial t}(0, x) = 0,
        \end{gather}
        where the initial condition depends on the unknown parameters $\theta$, the position of <<peaks>> in the initial distribution
        \begin{equation*}
            h_\theta(x) = \sum\limits_{i=1}^d e^{-(x - \theta_i)^2}.
        \end{equation*}
        The solution to this problem is given by
        \begin{equation*}
            u(\theta; t, x) = \frac{1}{2}(h_\theta(x - t) + h_\theta(x + t)).
        \end{equation*}
    \paragraph{Parabolic equation}
        Consider the initial-boundary value problem for the heat equation
        \begin{gather}\label{eq:ip_parabolic_pde}
            \frac{\partial u}{\partial t} - \frac{\partial^2 u}{\partial x^2} = 0, \\
            \left. \frac{\partial u}{\partial x} \right|_{x=\{-1, 1\}} = 0,\\
            u(0, x) = h_\theta(x)
        \end{gather}
        with the initial condition parametrized by a truncated Fourier series in the cosine basis
        \begin{equation*}
            h_\theta(x) = \sum\limits_{j=0}^{d-1} \theta_i \cos{(j \pi x)}.
        \end{equation*}
        The solution to this problem is given by
        \begin{equation*}
            u(\theta; t, x) = \sum\limits_{j=0}^{d-1} \theta_j e^{-(\pi j)^2 t}\cos{(j \pi x)}.
        \end{equation*}
    For both forward models, the inverse problem is formulated in the same way.
    Firs, a Gaussian prior on the parameters is chosen by
    \[
        \theta_k \sim \mathcal{N}(0, \sigma_{0,k}^2) \qquad \text{ independent on each other}.
    \]
    For the hyperbolic problem, these distributions have the same standard deviation $\sigma_{0,k} = \sigma_0$.
    For the parabolic ones, the standard deviation decays like $\sigma_{0,k} = \frac{\sigma_0}{k + 1}$, corresponding to a truncation of a trace-class covariance operator in a Karhunen-Lo\`eve expansion.
    A set of parameters $\theta^*$ from the prior distribution is sampled and fixed as <<true>> parameters.
    Second, noisy measurements of the solution
        \[
            \tilde u_{ij} = u(\theta^*; t_i, x_j) + \xi_{ij},\quad \xi_{ij} \sim \mathcal{N}(0, \sigma_{meas}^2\operatorname{I}_d)\quad \text{ i.i.d.}
        \]
    are generated.
    The space and time discretization points are located equidistantly.
    The goal is to reconstruct the parametrized initial condition given these measurements, their likelihood function and the prior distribution of the parameters.
    The potential of the posterior distribution is defined by
        \begin{equation}\label{eq:post_potential_hyperbolic}
            V_\infty(\theta) = \frac{1}{2\sigma_{meas}^2}\sum\limits_{i,j}|u(\theta; t_i, x_j) - \tilde u_{ij}|^2
                + \frac{1}{2}\sum\limits_{k=1}^d\frac{1}{\sigma_{0,k}^2}|\theta_k|^2.
        \end{equation}
    We aim to sample from the posterior distribution.
    In addition to comparing the sample to the reference, we provide an estimate to the parameters along with some uncertainty quantification.
    Since from the fitted TT approximation a discretized version of the marginal probability density of each of the parameters can be obtained, various statistical quantities such as means, variances, quantiles, etc. can be estimated. 
    We choose to provide a \emph{maximal a posteriori} (MAP) estimate and an $89\%$-highest density credible interval, i.e. the minimal width interval containing $89\%$ of the marginal posterior.

    One can notice that in its original form the forward problem of the hyperbolic equation is invariant under permutation of the parameters (positions of the <<peaks>> in the initial distribution).
    In the preliminary computations we noticed the said invariance leads to a posterior distributions with very many modes, which then hinders the convergence of all the methods and renders the interpretation of the results difficult.
    Thus, we choose to enforce the condition that the parameters are in increasing order by sampling with the perturbed potential
    \[
        V_{\infty,s}(\theta) = V_\infty(\theta) + \begin{cases}
                                            0\text{, if }\theta_1 \leq \theta_2 \leq \dots \leq \theta_d \\
                                            +\infty\text{, otherwise}
                                        \end{cases}.
    \]

    First, the approximation to the posterior with the TT method is constructed.
    Next, samples with the TT model and MCMC with the same amount of the posterior calls are generated and compared in the same fashion as in the previous section. 
    \begin{table}[ht]
        \centering
        \begin{tabular}{lccccccccc}
\toprule
Distribution & $d$ & $r_{max}$ & \multicolumn{2}{c}{$N_\infty$} & \multicolumn{3}{c}{$S^2_{\varepsilon}\times 10^3$} & \multicolumn{2}{c}{Double OT} \\
 &  &  & Unique & Total & Ref. to ref. & Ref. to TT & Ref. to MCMC & TT & MCMC \\
\midrule
Hyperbolic & $6$ & 1 & $22K$ & $92K$ & $0.56 \pm 0.21$ & $5.23 \pm 0.84$ & $277.37 \pm 31.34$ & $2.2 \cdot 10^{-5}$ & $7.8 \cdot 10^{-2}$ \\
Parabolic & $10$ & 1 & $15K$ & $60K$ & $1.61 \pm 0.28$ & $2.09 \pm 0.30$ & $781.28 \pm 55.29$ & $2.4 \cdot 10^{-7}$ & $6.1 \cdot 10^{-1}$ \\
\bottomrule
\end{tabular}

        \smallskip
        \caption{Summary of results for each problem showing its dimension, maximal TT rank, posterior call counts, average Sinkhorn distance with standard deviation for each sampling method, and the double OT estimate between distributions.}
        \label{tab:sec6_result}
    \end{table}
    The results are presented in \tabref{tab:sec6_result}, with the same metrics tracked as in the previous section. 
    One can observe that quite a good approximation superior to the baseline method can be achieved by our TT representation.
    The caching provides at least $4$ times acceleration in terms of the number of posterior calls.

    \begin{table}[ht]
        \begin{subtable}{\textwidth}
            \centering
            \begin{tabular}{c||ccc|ccc|c}
\toprule
$\theta^*$ & \multicolumn{3}{c}{$\theta_{\text{MCMC}}$} & \multicolumn{3}{c}{$\theta_{\text{TT}}$} & $\epsilon_{\text{rel}}$ \\
 & min & MAP & max & min & MAP & max &  \\
\midrule
$-2.30$ & $-2.31$ & $-2.13$ & $-1.90$ & $-2.33$ & $-2.15$ & $-1.97$ & $0.11$ \\
$-1.07$ & $-1.24$ & $-1.15$ & $-0.82$ & $-1.24$ & $-1.06$ & $-0.86$ & $0.04$ \\
$-0.61$ & $-0.84$ & $-0.42$ & $-0.39$ & $-0.81$ & $-0.64$ & $-0.49$ & $0.14$ \\
$-0.53$ & $-0.44$ & $-0.37$ & $-0.04$ & $-0.36$ & $-0.21$ & $-0.07$ & $0.13$ \\
$0.87$ & $0.74$ & $0.96$ & $1.12$ & $0.73$ & $0.88$ & $1.04$ & $0.12$ \\
$1.62$ & $1.69$ & $1.91$ & $2.13$ & $1.70$ & $1.91$ & $2.07$ & $0.08$ \\
\bottomrule
\end{tabular}

            \smallskip
            \caption{Hyperbolic}
            \label{tab:parameters_hyperbolic}
        \end{subtable}
        \vfill
        \begin{subtable}{\textwidth}
            \centering
            \begin{tabular}{c||ccc|ccc|c}
\toprule
$\theta^*$ & \multicolumn{3}{c}{$\theta_{\text{MCMC}}$} & \multicolumn{3}{c}{$\theta_{\text{TT}}$} & $\epsilon_{\text{rel}}$ \\
 & min & MAP & max & min & MAP & max &  \\
\midrule
$1.62$ & $1.60$ & $1.64$ & $1.69$ & $1.58$ & $1.67$ & $1.70$ & $0.17$ \\
$0.31$ & $0.13$ & $0.24$ & $0.34$ & $0.12$ & $0.23$ & $0.33$ & $0.03$ \\
$0.18$ & $0.03$ & $0.24$ & $0.45$ & $0.05$ & $0.25$ & $0.47$ & $0.05$ \\
$0.27$ & $-0.35$ & $-0.24$ & $0.27$ & $-0.36$ & $-0.05$ & $0.24$ & $0.03$ \\
$-0.17$ & $-0.34$ & $-0.01$ & $0.26$ & $-0.35$ & $-0.04$ & $0.25$ & $0.00$ \\
$0.38$ & $-0.28$ & $0.02$ & $0.21$ & $-0.26$ & $-0.01$ & $0.26$ & $0.07$ \\
$-0.25$ & $-0.23$ & $-0.01$ & $0.22$ & $-0.22$ & $0.00$ & $0.23$ & $0.02$ \\
$0.10$ & $-0.19$ & $0.03$ & $0.21$ & $-0.19$ & $0.00$ & $0.20$ & $0.02$ \\
$-0.04$ & $-0.15$ & $-0.02$ & $0.20$ & $-0.17$ & $-0.00$ & $0.17$ & $0.08$ \\
$0.02$ & $-0.16$ & $-0.00$ & $0.15$ & $-0.16$ & $0.00$ & $0.16$ & $0.02$ \\
\bottomrule
\end{tabular}

            \smallskip
            \caption{Parabolic}
            \label{tab:parameters_parabolic}
        \end{subtable}
        \caption{Parameters and their credible intervals, estimated by the TT method and the baseline MCMC. Low relative error $\epsilon_{rel}$ shows good agreement between the methods.}
        \label{tab:ip_result}
    \end{table}
    The results of the parameter estimation is presented in \tabref{tab:ip_result}.
    The leftmost column provides the true parameters $\theta^*$.
    Next, minimal and maximal value of a \emph{Highest Density Interval} (HDI) along with the MAP estimate are presented.
    This is first for the reference MCMC chain with $\gtrsim 10^5$ iterations and then for the TT method.
    The error of determination of the intervals relative to their respective lengths (as uncertainty quantification) is provided in the rightmost column.
    \begin{figure}[ht]
        \begin{subfigure}{0.45\textwidth}
            \centering
            \vfill
            \includegraphics[width=\textwidth]{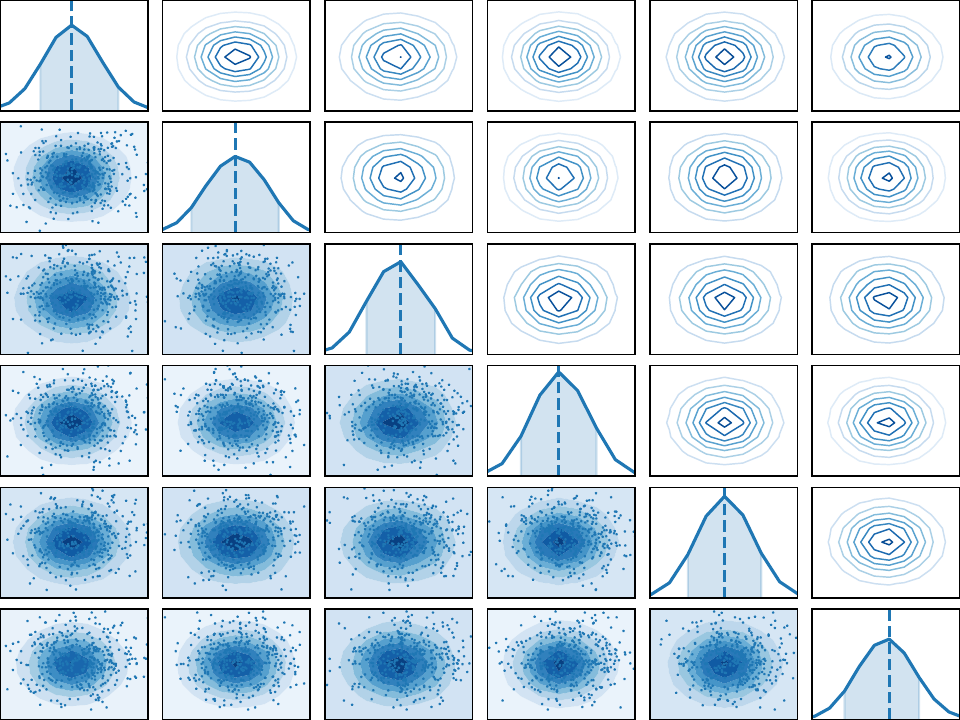}
            \vfill
            \caption{Contour lines and samples of $1$-~and $2$-d marginals of the TT approximation of the posterior. The $89\%$ credible interval for each parameter is filled.}
            \label{fig:hyperbolic_distribution}
        \end{subfigure}
        \hfill
        \begin{subfigure}{0.5\textwidth}
            \centering
            \vfill
            \includegraphics[width=\textwidth]{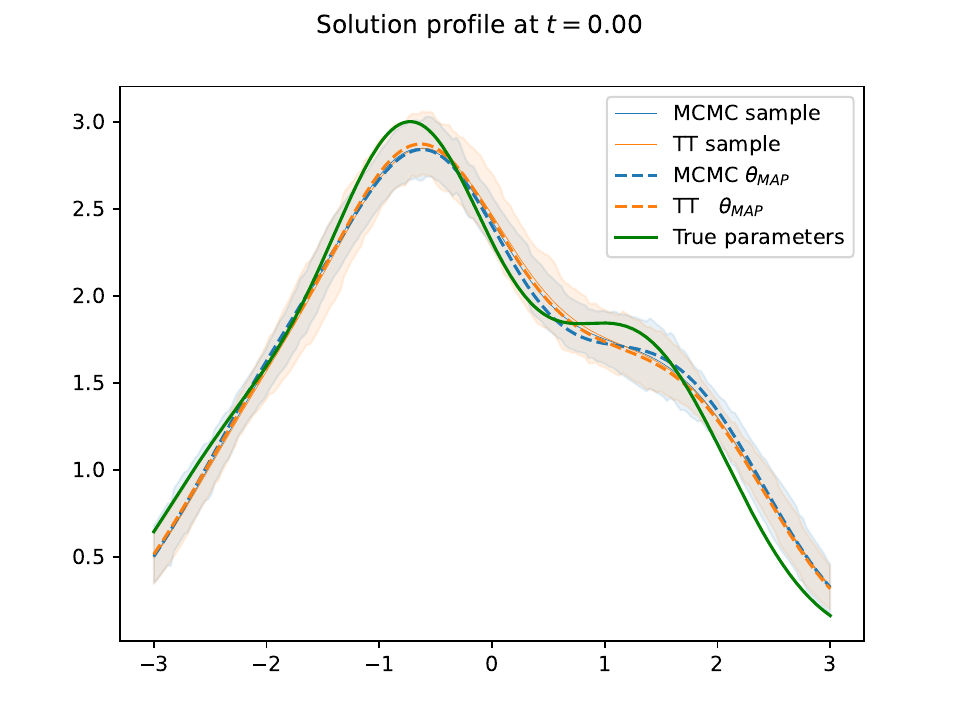}
            \vfill
            \caption{Reconstruction of the initial condition by the TT method and MCMC reference. The filled patch represents a $89\%$ credible interval for the solution $\left. u(x)\right|_{t = 0}$ estimated via samples.}
            \label{fig:hyperbolic_solution}
        \end{subfigure}
        \caption{Posterior distribution and reconstruction of the initial solution for the hyperbolic problem.}
        \label{fig:res_hyperbolic}
    \end{figure}
    \begin{figure}[ht]
        \begin{subfigure}{0.45\textwidth}
            \centering
            \vfill
            \includegraphics[width=\textwidth]{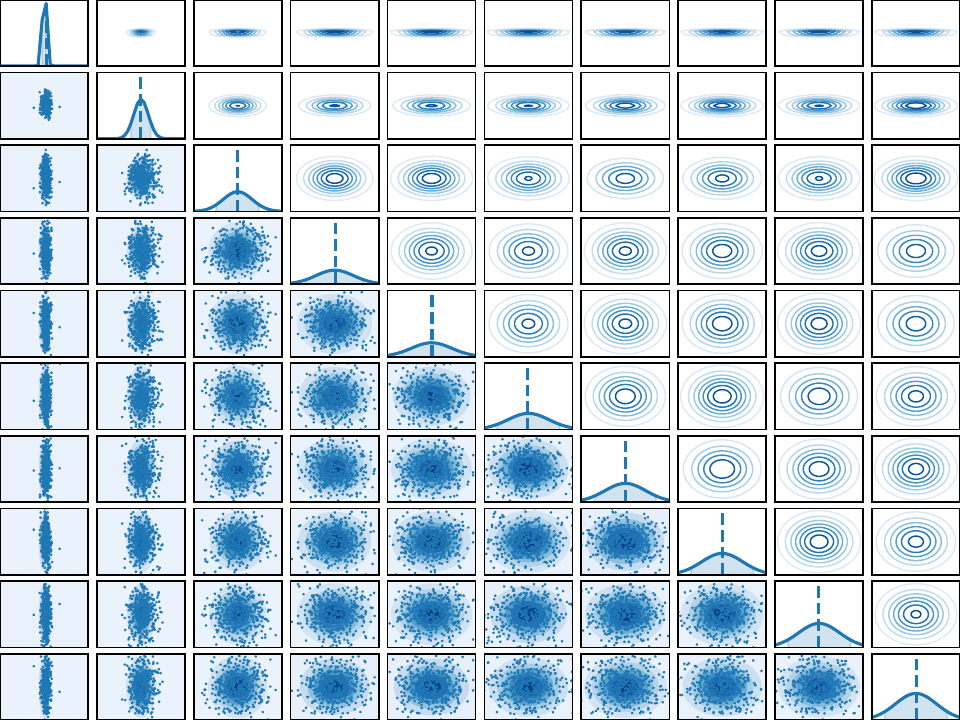}
            \vfill
            \caption{Contour lines and samples of $1$-~and $2$-d marginals of the TT approximation of the posterior. The $89\%$ credible interval for each parameter is filled.}
            \label{fig:parabolic_distribution}
        \end{subfigure}
        \hfill
        \begin{subfigure}{0.5\textwidth}
            \centering
            \vfill
            \includegraphics[width=\textwidth]{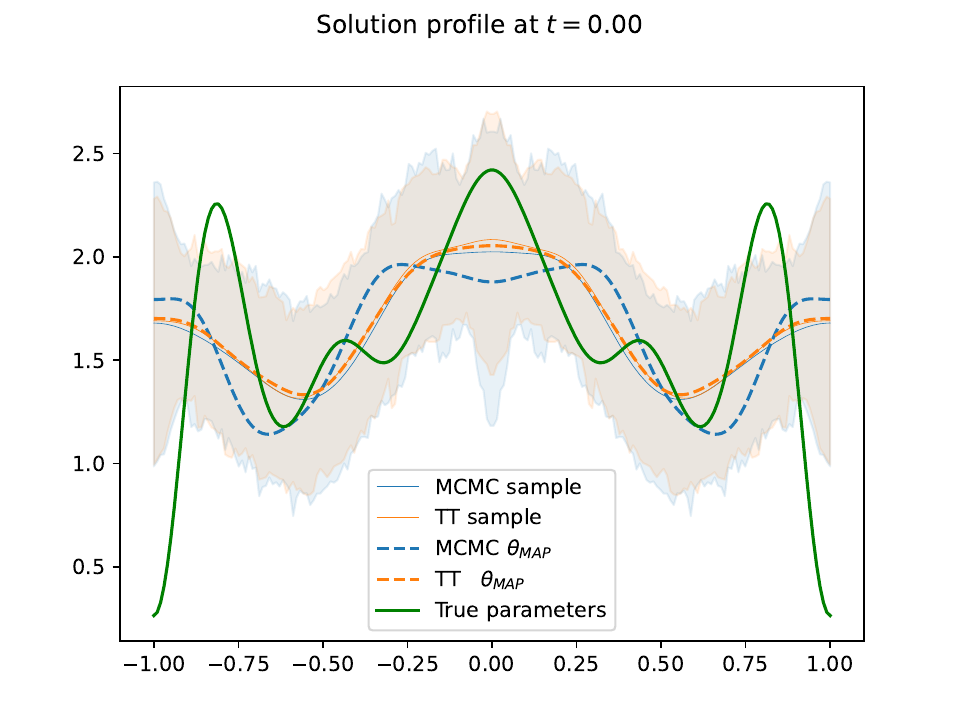}
            \vfill
            \caption{Reconstruction of the initial condition by the TT method and MCMC reference. The filled patch represents a $89\%$ credible interval for the solution $\left. u(x)\right|_{t = 0}$ estimated via samples.}
            \label{fig:parabolic_solution}
        \end{subfigure}
        \caption{Posterior distribution and reconstruction of the initial solution for the parabolic problem.}
        \label{fig:res_parabolic}
    \end{figure}

    \subsection{Modelling of the importance distribution}
        We argue that the structured representation of the posterior density as a discretized function provides the opportunity to modify the solution while reusing the computationally expensive posterior calls.
        For example, new variables can be added to the distribution by concatenating new cores to the Tensor Train representing the posterior density.
        In case when the forward operator calls are the bottleneck in computation of the posterior density, these can be cached (instead of the posterior calls) and reused in posterior fitting for multiple experiments with different measurements but within the same parameter bounds.
        Leaving a thorough examination of these extensions to further work, we provide a single example, namely demonstrating that approximating the importance distribution for a certain quantity of interest can be carried out without any additional calls to the posterior density.

        To briefly summarize the idea of importance sampling, given a certain quantity of interest $F$ for which $\mathbb{E}_{\rho_\infty}[F]$ has to be estimated, one designs a distribution $\rho_F$ and a weighting function $w$ such that $\mathbb{E}_{\rho_F} [wF] = \mathbb{E}_{\rho_\infty}[F]$ and the variance is minimized.
        The \emph{importance weight} takes the form 
        \begin{equation}\label{eq:importance_weight}
            w(x) = \frac{\rho_\infty(x)}{\rho_F(x)}
        \end{equation}
        and in case of the optimal importance distribution~\cite[{Chapter~{5.5}}]{scheichl2021numerical},
        \begin{gather}
            \rho_{F,\text{opt}}(x) \propto |F(x)|\rho_\infty(x) \quad\text{and}\quad
            w_\text{opt}(x) = \frac{1}{|F(x)|}.\label{eq:importance_weight_opt}
        \end{gather}        
        The estimator for $\mathbb{E}_{\rho_\infty}[F]$ is then given by
        \begin{equation}\label{eq:importance_estimator}
            \hat{F}_N = \frac{\sum\limits_{i=1}^N w(x_i) F(x_i)}{\sum\limits_{i=1}^N w(x_i) }\qquad \text{with}\quad x_i \sim \rho_F \text{ i.i.d.}
        \end{equation}
        Although $\rho_\infty$ and $\rho_F$ are both known only up to multiplicative factors, these factors cancel out in~\eqref{eq:importance_estimator}.
        We note that this estimate is biased.

        To demonstrate importance sampling with the TT model, we modify the setting used in the inverse problem for the parabolic equation.
        Mimicking certain real-world settings, we assume that the solution in the inside area is of practical interest but not accessible for direct measurements~\cite{martin1996inverse,martin2024inverse,carasso1992space}.
        We use the same geometry and the same priors on the parameters as in the parabolic problem in the previous section but assume there are no measurement points in $x \in [-\frac{1}{2}, \frac{1}{2}]$ and the $10$ measurements are taken outside of the interval with equal spacing.
        We suppose that a reliable measurement of $\mathbb{E}_{\theta \sim \rho_\infty} u(\theta; 0, 0)$ is required.
        Note that the computation of the function $F(\theta) := u(\theta; 0, 0)$ only requires the evaluation of the parametrized initial condition but not the expensive forward model.
        
        We fit one step of the method with initial step $\rho_\text{TT}$ (the TT approximation of the posterior) and the target~$\propto\|F\|\rho_\text{TT}$, assuming the resulting approximation $\rho_{F,\text{TT}}$ to be an adequate approximation to the importance distribution.
        Then, $10$ samples of size $N_{\text{samples}} = 400$ each are generated.
        And for each of them, the expectation of $F$ is estimated via the estimator~\eqref{eq:importance_estimator}.
        The computation of the true importance weight~\eqref{eq:importance_weight} still requires $N_\text{samples}$ posterior calls.
        We have noted however that the resulting distribution of the estimates for $F$ does not change significantly if the optimal weight~\eqref{eq:importance_weight_opt} is used instead. 
        Thus, the estimate can be computed without any posterior calls at all.
        This approach is compared to estimating the mean directly from the samples $F(x),\ x\sim \rho_\infty$.
        The results are depicted in Figure~\ref{fig:importance_mean}.
        \begin{figure}[ht]
            \centering
            \includegraphics[width=0.5\linewidth]{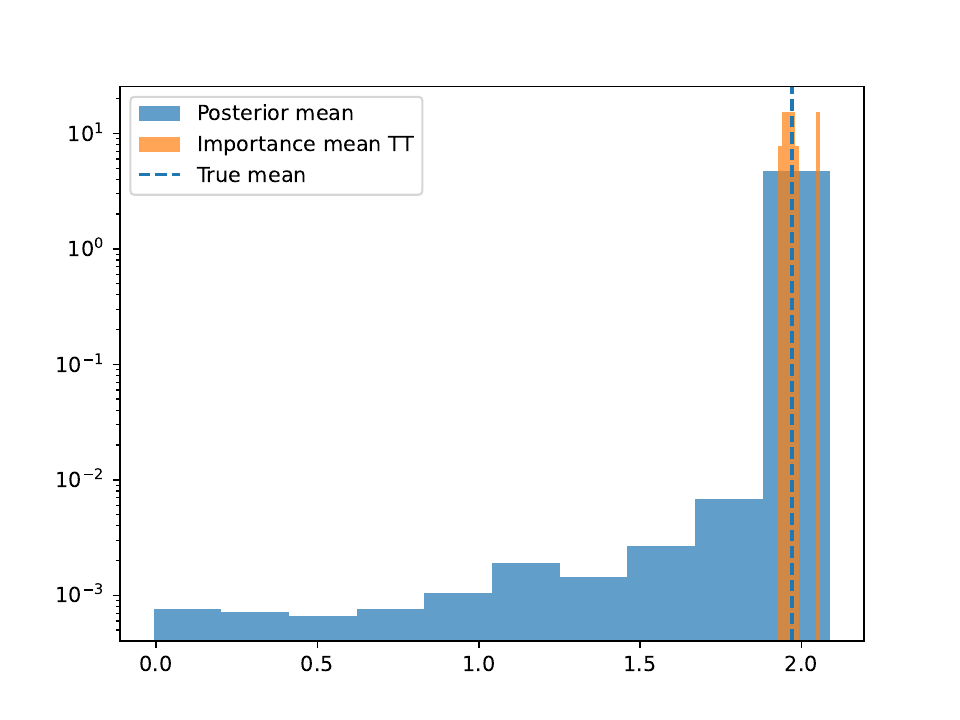}
            \caption{Distribution of the estimate of $\mathbb{E}_{\rho_\infty}F$ via the posterior mean (blue) and the importance estimator~\eqref{eq:importance_estimator} along with the true mean (vertical blue line).}
            \label{fig:importance_mean}
        \end{figure}
        One can see that the estimates without importance sampling (blue) are quite spread out.
        The estimates via the importance distribution are not centered around the actual mean due to the bias in the estimate, but in general are much closer to it.
        Thus, the importance estimator visibly reduces variance of the estimation, and is a useful tool, provided that with our method it can be acquired without any additional posterior calls.

\section{Discussion}
    With the current implementation using a finite difference approximation of the functions, we are already able to demonstrate the main ideas of the proposed new method, namely the advantages of the Eulerian approach, the benefit of caching and the re-usability of the results for similar problems.
    There is however still a lot of room for improvements.
    The interpolation could be improved by using the \emph{functional Tensor-Train} format~\cite{oseledets2013constructive}.
    A careful choice of the basis function should be dictated by the properties of the problem.
    To allow for an even more extensive re-use of posterior calls, one could consider building a model for the joint distribution $\rho(x, y)$ and conditioning it to certain values of the measurement $y$.
    In this approach, the model is re-used to estimate multiple parameter sets from multiple measurements carried out with the same experimental setup.
    Numerically, this can be achieved either by directly conditioning the TT model for $\rho(x, y)$ by means of a multidimensional integration or by considering proximal steps with respect to the conditional Wasserstein distance~\cite{chemseddine2024conditional}.

    The regularized JKO scheme presented in the current paper can in principle be viewed as a general-purpose minimization algorithm in Wasserstein space.
    This can be achieved as long as an efficient computation of the energy first variation of a general energy by~\eqref{eq:fp_general_end} can be implemented.
    Energies different from $\operatorname{KL}$ divergence may for example arise in case when in addition to approximating the target distribution the model has to meet certain constraints such as fairness, safety or interpretability (see~\cite{liu2021sampling} and references therein).

\section*{Acknowledgements}
    VA acknowledges the support of the EMPIR project 20IND04-ATMOC.
    This project (20IND04 ATMOC) has received funding from the EMPIR programme co-financed by the Participating States and from the European Union's Horizon 2020 research and innovation programme.
    ME was supported by ANR-DFG project ``COFNET'' and DFG SPP 2298 ``Theoretical Foundations of Deep Learning''.
    
    The authors would like to express their gratitude to Mathias Oster, David Sommer, Robert Gruhlke and Reinhold Schneider for helpful discussions.

    \texttt{ChatGPT} was used in the process of editing of the present paper.

\bibliographystyle{unsrt}  
\bibliography{references}

\end{document}